\documentclass[a4paper,11pt]{article}
\usepackage[latin1]{inputenc}
\usepackage[english]{babel}
\usepackage{amsmath}
\usepackage{amsfonts}
\usepackage{amssymb}
\usepackage{epsfig}
\usepackage{amsopn}
\usepackage{amsthm}
\usepackage{color}
\usepackage{graphicx}
\usepackage{enumerate}
\usepackage{mathrsfs}
\usepackage{cite}
\newtheorem{theorem}{Theorem}[section]

\newtheorem{lemma}[theorem]{Lemma}
\newtheorem{proposition}[theorem]{Proposition}
\newtheorem{definition}[theorem]{Definition}

\newtheorem*{theorem*}{Theorem}
\newtheorem*{lemma*}{Lemma}
\newtheorem*{remark*}{Remark}
\newtheorem*{definition*}{Definition}
\newtheorem*{proposition*}{Proposition}
\newtheorem*{corollary*}{Corollary}
\numberwithin{equation}{section}
\newcommand{\real}{\mathbb{R}}

\let\ced=\c         

\def\qed{\,\unskip\kern 6pt \penalty 500
\raise -2pt\hbox{\vrule \vbox to8pt{\hrule width 6pt
\vfill\hrule}\vrule}\par}
\definecolor{darkblue}{rgb}{0.05, .05, .65}
\definecolor{darkgreen}{rgb}{0.1, .65, .1}
\definecolor{darkred}{rgb}{0.8,0,0}
\newcommand{\beqn}{\begin{equation}}
\newcommand{\eeqn}{\end{equation}}
\newcommand{\bear}{\begin{eqnarray}}
\newcommand{\eear}{\end{eqnarray}}
\newcommand{\bean}{\begin{eqnarray*}}
\newcommand{\eean}{\end{eqnarray*}}
\begin{document}

\title{\huge \bf Self-similar extinction for a fast diffusion equation with weighted absorption}
\author{
\Large Razvan Gabriel Iagar\,\footnote{Departamento de Matem\'{a}tica
Aplicada, Ciencia e Ingenieria de los Materiales y Tecnologia
Electr\'onica, Universidad Rey Juan Carlos, M\'{o}stoles,
28933, Madrid, Spain, \textit{e-mail:} razvan.iagar@urjc.es}
\\[4pt] \Large Diana-Rodica Munteanu\,\footnote{Faculty of Psychology and Educational Sciences, Ovidius University of Constanta, 900527, Constanta, Romania, \textit{e-mail:} diana\_munteanu@365.univ-ovidius.ro}\\
}
\date{}
\maketitle

\begin{abstract}
Finite time extinction of any bounded solution to the fast diffusion equation with spatially inhomogeneous absorption
$$
\partial_tu=\Delta u^m-|x|^{\sigma}u^p, \quad (x,t)\in\real^N\times(0,\infty),
$$
with $N\geq1$ and exponents
$$
p>1, \quad m_c=\frac{(N-2)_+}{N}<m<1, \quad \sigma>\sigma_*:=\frac{2(p-1)}{1-m},
$$
is established. Moreover, the existence of self-similar solutions of the form
$$
U(x,t)=(T-t)^{\alpha}f(|x|(T-t)^{\beta}), \quad  \alpha=\frac{\sigma+2}{(1-m)(\sigma-\sigma_*)}, \ \beta=\frac{p-m}{(1-m)(\sigma-\sigma_*)},
$$
with $f(0)>0$, $f'(0)=0$ and
$$
\lim\limits_{\xi\to\infty}\xi^{(\sigma+2)/(p-m)}f(\xi)=L\in(0,\infty).
$$
is proved, together with some unbounded self-similar solutions as well. The property of finite time extinction is in striking contrast to the standard fast diffusion equation with absorption (that is, $\sigma=0$), where the strict positivity of solutions for any $t\in(0,\infty)$ is well-known.
\end{abstract}

\smallskip

\noindent {\bf MSC Subject Classification 2020:} 35B33, 35B40, 35C06, 35K15, 35K65, 34D05.

\smallskip

\noindent {\bf Keywords and phrases:} finite time extinction, spatially inhomogeneous absorption, fast diffusion, extinction rates, backward self-similar solutions.

\section{Introduction}

The fast diffusion equation
\begin{equation}\label{FDE}
u_t=\Delta u^m, \quad (x,t)\in\real^N\times(0,T), \quad m\in(0,1)
\end{equation}
has consolidated itself as a central model in the theory of nonlinear diffusion equations as well as in the theory of singular equations. Numerous applications in other sciences and engineering such as, for example, gas kinetics, thin liquid film dynamics or anomalous diffusion of hydrogen plasma across a magnetic field, have been obtained and are mentioned in the recent paper by Bonforte and Figalli \cite{BF24} (see also references therein). But even more remarkable are its mathematical features, some of them being rather striking at a first glance. A number of critical values of $m$ have been identified, and we mention here the most well-known critical exponent
$$
m_c:=\frac{(N-2)_+}{N}=\left\{\begin{array}{ll}\frac{N-2}{N}, & {\rm if} \ N\geq3,\\0, & {\rm if} \ N\in\{1,2\},\end{array}\right.
$$
which limits between the \emph{supercritical range} $m>m_c$, where conservation of mass holds true for integrable solutions, and the \emph{subcritical range} $0<m<m_c$ (where necessarily $N\geq3$), in which finite time extinction takes place. Throughout the paper, we say that a solution $u(\cdot,t)$ to \eqref{FDE} or any other partial differential equation \emph{vanishes in finite time} if there is $T\in(0,\infty)$ such that $u(\cdot,t)\not\equiv0$ for any $t\in(0,T)$, but $u(x,T)\equiv0$. A good exposition of the mathematical theory of the fast diffusion equation can be found in the monograph \cite{VazSmooth} and in the above mentioned work \cite{BF24}. In particular, the finite time extinction for $m\in(0,m_c)$ occurs due to an unexpected phenomenon of loss of mass through infinity, explained in \cite[Section 5.5]{VazSmooth}.

In this paper, we consider a fast diffusion equation featuring an absorption term depending also on the space variable
\begin{equation}\label{eq1}
u_t=\Delta u^m-|x|^{\sigma}u^p, \quad (x,t)\in\real^N\times(0,T),
\end{equation}
with exponents restricted to
\begin{equation}\label{range.exp}
m_c<m<1, \quad p>1, \quad \sigma>\sigma_*:=\frac{2(p-1)}{1-m}.
\end{equation}
Let us notice that Eq. \eqref{eq1} features a competition between the fast diffusion term which, in the super-critical range, preserves the $L^1$ norm of any integrable solution, and an absorption term tending to decrease the total mass of the solutions. Thus, a natural expectation is that the solutions to Eq. \eqref{eq1} should be decreasing with respect to time. Moreover, the presence of the weight $|x|^{\sigma}$ has the effect of pondering the strength of the absorption term, which should be stronger in regions with $|x|$ large and weaker in regions with $|x|$ small.

A close relative of Eq. \eqref{eq1} is the fast diffusion equation with spatially homogeneous absorption, corresponding to taking $\sigma=0$ in Eq. \eqref{eq1}:
\begin{equation}\label{eq1.hom}
u_t=\Delta u^m-u^p,
\end{equation}
with $m$ and $p$ as in \eqref{range.exp}. Eq. \eqref{eq1.hom} has been considered in a number of works in the past. Peletier and Zhao established in \cite{PZ90, PZ91} the existence of source type solutions and of very singular solutions to Eq. \eqref{eq1.hom} and then proved that general solutions to the Cauchy problem with suitable initial conditions converge towards a unique very singular solution (when such a solution exists and is unique) as $t\to\infty$. The existence of a very singular solution in self-similar form is established in \cite{Le96}, while its uniqueness is proved in \cite{Kwak98b}. Later, a full classification of both singular and very singular solutions to Eq. \eqref{eq1.hom} is performed in \cite{CQW05}. The large time behavior of solutions to Eq. \eqref{eq1.hom} with the critical absorption exponent $p=m+2/N$ has been completed by one of the authors and his collaborators in \cite{BIL16}, improving a previous result established in \cite{SW06}. We complete this list of precedents with a very recent work \cite{XFMQ25}, where an improved large time behavior is established when $p>m+2/N$ by employing entropy methods. As a general outcome of all these precedents, solutions to Eq. \eqref{eq1.hom} with exponents $m$ and $p$ as in \eqref{range.exp} are indeed decreasing in time, but remain \emph{always positive}; that is, $u(x,t)>0$ for any $(x,t)\in\real^N\times(0,\infty)$.

To the best of our knowledge, equations such as Eq. \eqref{eq1} have been considered, in the slow diffusion range $m>1$ and $p>1$, for the first time by Peletier and Tesei \cite{PT85, PT86}, motivated by models from mathematical biology introduced in \cite{GMC77, N80}. The authors considered recently Eq. \eqref{eq1.hom} in the range of exponents $p>m>1$ and $\sigma>0$ and established the large time behavior of solutions for a large amount of initial conditions in \cite{IM25b}, the asymptotic profiles being derived in the previous paper \cite{IM25a} where a complete classification of self-similar solutions to Eq. \eqref{eq1} with $p>m>1$ is achieved.

Eq. \eqref{eq1} has been also studied in the so-called strong absorption range, that is, $m>1$ and $p\in(0,1)$, in a series of works by Belaud and Shishkov \cite{Belaud01, BeSh07, BeSh22}. In these papers, finite time extinction is established for solutions to the homogeneous Dirichlet and Neumann problems for Eq. \eqref{eq1} in a bounded domain, provided $\sigma<\sigma_*$, where $\sigma_*$ is the exponent introduced in \eqref{range.exp}. This property of finite time extinction is extended in \cite{IL23} where it is proved that any solution to the Cauchy problem in $\real^N$ vanishes in finite time, provided $m>1$, $p\in(0,1)$ and $0\leq\sigma<\sigma_*$. In the complementary range $\sigma\geq\sigma_*$ (still in the range of exponents $m>1$ and $p<1$), both finite time extinction and decay to zero of solutions in infinite time might occur, depending on the properties of the initial condition \cite{IL23, ILS24, IL25, ILS25}. However, let us emphasize here that the phenomenon of finite time extinction is driven, when $0<p<1$ and $m>1$, by the term $-u^p$, as it takes place for $\sigma=0$ as well, and it is a feature of the ordinary differential equation $u_t=-u^p$ alone, without any diffusion term. Due to the previous remarks, the range $p\in(0,1)$ it is often referred in the literature as the \emph{strong absorption range}.

The main goal of this paper is thus to reveal an important qualitative difference introduced by the presence of the weight $|x|^{\sigma}$. Indeed, as commented in the above paragraphs, with $\sigma=0$, solutions to Eq. \eqref{eq1.hom} with $m\in(m_c,1)$ and $p>1$ are always positive in $\real^N$, at any time $t\in(0,\infty)$; this is a feature inherited from the differential equation $u_t=-u^p$ with $p>1$. However, as we shall see below, all bounded solutions to Eq. \eqref{eq1} in the range of exponents \eqref{range.exp} vanish in a finite time. The decisive factor leading to this extinction is thus the weight $|x|^{\sigma}$, whenever $\sigma$ is sufficiently large (more precisely, $\sigma>\sigma_*$). Thus, we may say that the factors $|x|^{\sigma}$ and $u^p$ act as a kind of ``team" and the weight $|x|^{\sigma}$ is able to transform an initially weak absorption exponent ($p>1$ taken alone) into a strong absorption term taken together, in the sense explained in the previous paragraph.

\medskip

\noindent \textbf{Main results.} In order to state our main results, we consider bounded and non-negative initial conditions; that is, we work with initial conditions
\begin{equation}\label{ic}
u(0)=u_0\in L^{\infty}_+(\real^N):=\big\{v\in L^\infty(\real^N)\ :\ v(x)\ge 0 \;\text{ a.e. in }\; \real^N \big\}
\end{equation}
and we introduce the usual notion of weak solution following the analogous one in \cite{PZ91}:
\begin{definition}\label{def.weak}
We say that $u$ is a non-negative weak solution to the Cauchy problem \eqref{eq1}-\eqref{ic} if
\begin{equation*}
u \in L_+^\infty((0,\infty)\times\real^N)\cap C((0,\infty)\times\real^N),
\end{equation*}
and the following weak formulation is satisfied in any domain $\Omega\subset\real^N$ with smooth boundary $\partial\Omega$ and for any $T>0$:
\begin{equation}\label{weak.form}
\begin{split}
\int_{\Omega}u(x,T)\zeta(x,T)\,dx&-\int_{\Omega}u_0(x)\zeta(x,0)\,dx=\int_0^T\int_{\Omega}\big(u\zeta_t+u^m\Delta\zeta-|x|^{\sigma}u^p\zeta\big)\,dx\,dt\\
&-\int_0^T\int_{\partial\Omega}u^m\nabla\xi\cdot\nabla\nu\,dS(x)\,dt,
\end{split}
\end{equation}
for any $\zeta\in C^2(\overline{\Omega}\times[0,T])$ such that $\zeta=0$ on $\partial\Omega\times[0,T]$. Here, the notation $\nu$ designs the outward normal vector to $\partial\Omega$.
\end{definition}
The \textbf{well-posedness} of the Cauchy problem \eqref{eq1}-\eqref{ic} and its subsequent comparison principle follow completely analogously as in the proof of \cite[Theorem 2.1]{PZ91}, since the proof therein is based on an approximation of the Cauchy problem by suitable Dirichlet problems in large (but initially fixed) balls $B(0,R(\epsilon))\times(0,T)$ and in such balls, the term $|x|^{\sigma}$ is bounded by $R(\epsilon)^{\sigma}$ and only adds up to the bound $C(\epsilon)$ (in the notation of \cite{PZ91}) until the end of the proof. Moreover, since $1>m>m_c$, we readily infer that $N(1-m)<2$ and thus the condition
$$
p>\frac{N}{2}(1-m),
$$
required in \cite[Theorem 2.1]{PZ91}, is fulfilled by any $p>1$.

\medskip

As we have said, the aim of this paper is to study the phenomenon of finite time extinction. Before stating the results, let us introduce the self-similarity exponents
\begin{equation}\label{SSexp}
\alpha:=\frac{\sigma+2}{(1-m)(\sigma-\sigma_*)}, \quad \beta:=\frac{p-m}{(1-m)(\sigma-\sigma_*)}.
\end{equation}
For any $A\in(0,\infty)$ and $T\in(0,\infty)$, we thus consider self-similar solutions in backward form
\begin{equation}\label{SSS}
U_{A,T}(x,t)=(T-t)^{\alpha}f(|x|(T-t)^{\beta};A),
\end{equation}
where $\alpha$ and $\beta$ are defined in \eqref{SSexp} and the profiles $f(\cdot;A)$ are solutions to the following differential equation
\begin{equation}\label{SSODE}
(f^m)''(\xi)+\frac{N-1}{\xi}(f^m)'(\xi)+\alpha f(\xi)+\beta\xi f'(\xi)-\xi^{\sigma}f^p(\xi)=0, \quad \xi\in(0,\infty),
\end{equation}
obtained by inserting the ansatz \eqref{SSS} into Eq. \eqref{eq1}, setting $\xi=|x|(T-t)^{\beta}$ and simplifying the time variable, and subject to the initial conditions
\begin{equation}\label{ic.ODE}
f(0;A)=A, \quad f'(0;A)=0.
\end{equation}
The next result states the existence of some self-similar solutions in the form \eqref{SSS} with profiles $f(\cdot;A)$ presenting some specific behaviors.
\begin{theorem}[Self-similar solutions with extinction]\label{th.SSS}
Let $m$, $p$ and $\sigma$ be as in \eqref{range.exp}. Then, there exists $A^*\in(0,\infty)$ such that the profile $f(\cdot;A^*)$ solution to \eqref{SSODE}-\eqref{ic.ODE} with $A=A^*$ has the tail behavior
\begin{equation}\label{decay}
\lim\limits_{\xi\to\infty}\xi^{(\sigma+2)/(p-m)}f(\xi;A^*)=L\in(0,\infty).
\end{equation}
Moreover, there exists $A_*\in(0,\infty)$, $A_*\leq A^*$, such that for any $A\in(0,A_*)$, the profile $f(\cdot;A)$ solution to \eqref{SSODE}-\eqref{ic.ODE} has a unique positive minimum; more precisely, there exists $\xi_0(A)\in(0,\infty)$ such that
\begin{equation}\label{increase}
\min\limits_{\xi\in(0,\infty)}f(\xi;A)=f(\xi_0(A);A)\in(0,A), \quad f(\cdot;A) \ {\rm increasing \ for} \ \xi>\xi_0(A).
\end{equation}
\end{theorem}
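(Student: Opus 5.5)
We argue by a shooting method in the parameter $A$, with the possible behaviors of the profiles $f(\cdot;A)$ sorted into three disjoint open-or-closed sets. As long as $f>0$, local existence and continuous dependence on $A$ for \eqref{SSODE}-\eqref{ic.ODE} follow from the standard fixed-point argument for the integral form
$$
\xi^{N-1}(f^m)'(\xi)=\int_0^\xi \eta^{N-1}\big(\eta^{\sigma}f^p-\alpha f-\beta\eta f'\big)(\eta)\,d\eta .
$$
At $\xi=0$ this gives $(f^m)''(0)=-\alpha A/N<0$, so every profile starts decreasing. We then define
\begin{equation*}
\begin{split}
&\mathcal{A}:=\{A>0:\ f(\cdot;A) \ \text{vanishes at some finite } \xi_1(A) \ \text{while decreasing}\},\\
&\mathcal{C}:=\{A>0:\ f(\cdot;A) \ \text{has a positive minimum and then increases}\},\\
&\mathcal{B}:=(0,\infty)\setminus(\mathcal{A}\cup\mathcal{C}).
\end{split}
\end{equation*}
For profiles in $\mathcal{B}$ we will show that $f$ stays positive and decreasing and has the decay \eqref{decay}. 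The set $\mathcal{A}$ is open by continuous dependence: the crossing of zero at $\xi_1(A)$ is transversal, since $(f^m)'<0$ there, and this remains true for $m<1$ when written in the variable $f^m$. The set $\mathcal{C}$ is open because a strict minimum followed by strict increase is a stable configuration.

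\textbf{Small $A$ lies in $\mathcal{C}$.} We rescale $f(\xi)=A\,g(\eta)$ with $\eta=A^{-(1-m)/2}\xi$, which turns \eqref{SSODE} into
$$
(g^m)''+\frac{N-1}{\eta}(g^m)'+\alpha g+\beta\eta g'-A^{\,p-1+(1-m)\sigma/2}\,\eta^{\sigma}g^p=0 .
$$
The exponent $p-1+(1-m)\sigma/2$ is positive, so as $A\to0$ the limit equation is the pure backward fast-diffusion profile equation. In this limit, $\alpha g+\beta\eta g'=\eta^{1-\alpha/\beta}\big(\eta^{\alpha/\beta}g\big)'\cdot$const. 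Since $\alpha/\beta=(\sigma+2)/(p-m)$ and we work with $m>m_c$, the limit solution with $g(0)=1$ must reach a positive minimum and then increase like a power. This is seen from the limit ODE: at a critical point, $(g^m)''=-\alpha g<0$ rules out a positive-minimum obstruction only for decreasing-to-zero profiles, and the phase-plane analysis of the autonomous limit equation in the variables $X=\eta^{2}g^{m-1}$, $Y=\eta(g^m)'/g^m$ shows there is no positive decreasing solution reaching zero. This is where $m>m_c$ enters, ruling out Barenblatt-type decay. By continuity, $A\in\mathcal{C}$ for $A$ small, which defines $A_*$.

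\textbf{Large $A$ lies in $\mathcal{A}$.} Here we rescale differently, $f=A\,h(\zeta)$ with $\zeta=A^{(p-1)/(\sigma+2)}\dots$, chosen so that the absorption and diffusion terms balance and $\alpha f+\beta\xi f'$ becomes negligible as $A\to\infty$. The limit is
$$
\Delta h^m=\zeta^{\sigma}h^p,\qquad h(0)=1 .
$$
Since $\sigma>\sigma_*$ (equivalently, this is the strong-absorption regime discussed in the introduction), its radial solutions are increasing, and the sign must be rechecked. The expectation is instead that the absorption forces vanishing at finite $\xi$: for large $A$ the term $-\xi^{\sigma}f^p$ dominates $\alpha f$ near the origin and drives $f$ to $0$ at a finite point. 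Concretely, we compare with the explicit singular profile $\xi^{-(\sigma+2)/(p-m)}$ (up to a constant) and use the integral identity above to show $(f^m)'\le -c\,\xi^{\sigma+1}A^{p}$ over a range long enough to exhaust $A^m$.

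\textbf{Conclusion and main obstacle.} Since $\mathcal{A}$ and $\mathcal{C}$ are disjoint, open and nonempty, connectedness gives $\mathcal{B}\ne\emptyset$; we let $A^*:=\sup\mathcal{C}$, which lies in $\mathcal{B}$ and satisfies $A_*\le A^*$. For $A^*$, $f$ is positive and nonincreasing. Setting $w=\xi^{(\sigma+2)/(p-m)}f$, we then prove that $w$ has a limit $L$ and that $L\in(0,\infty)$, by a phase-space argument showing convergence to the critical point corresponding to the balance $\alpha f+\beta\xi f'\approx\xi^{\sigma}f^p$ ... or to the tail where $\alpha f+\beta\xi f'\approx0$. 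The latter gives exactly the decay exponent $\alpha/\beta=(\sigma+2)/(p-m)$, with absorption negligible since $\sigma-(\sigma+2)(p-1)/(p-m)<$... We check this using $\sigma>\sigma_*$. The hardest step is this identification of the tail: excluding oscillation and decay faster than $\xi^{-\alpha/\beta}$. We plan to handle it by recasting \eqref{SSODE} as an autonomous three-dimensional system via $\xi$-power variables, where the decaying tails correspond to a hyperbolic critical point whose stable manifold must be shown to attract the separatrix orbit.
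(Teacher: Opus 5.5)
There is a genuine gap: your two endpoint regimes are reversed, and the mechanism you invoke for vanishing has the wrong sign. The rescaling that balances diffusion against $\alpha f+\beta\xi f'$ is $f=Ag(\eta)$ with $\eta=A^{(1-m)/2}\xi$. With your choice $\eta=A^{-(1-m)/2}\xi$ these two terms do not balance, so your displayed equation is incorrect. Under the correct scaling the absorption carries the factor $A^{p-1-(1-m)\sigma/2}=A^{-(1-m)(\sigma-\sigma_*)/2}$, which vanishes as $A\to\infty$, not as $A\to0$. Dually, under the diffusion--absorption balance $\zeta=A^{(p-m)/(\sigma+2)}\xi$, the term $\alpha f+\beta\xi f'$ carries $A^{(1-m)(\sigma-\sigma_*)/(\sigma+2)}$, which is negligible as $A\to0$. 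Hence small $A$ is governed by $\Delta h^m=\zeta^{\sigma}h^p$, whose radial solutions increase; this is what produces the positive minimum. Large $A$ is governed by the absorption-free backward profile equation. These are exactly the paper's limit orbits $l_\infty\subset\{x=0\}$ and $l_0\subset\{z=0\}$, since by \eqref{bij} $A$ decreases in $C$.

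Your small-$A$ claim is also false on its own terms. For the absorption-free equation every critical point satisfies $mg^{m-1}g''=-\alpha g<0$, as you wrote yourself, so a positive minimum is impossible. For large $A$, since $\Delta f^m=\xi^{\sigma}f^p-\alpha f-\beta\xi f'$, absorption \emph{increases} $(f^m)'$ in your integral identity. It therefore cannot give $(f^m)'\le -c\,\xi^{\sigma+1}A^p$. Vanishing is driven by $\alpha f+\beta\xi f'$, and proving that the absorption-free profile reaches zero at a finite point (rather than decaying at infinity) is a real step. The paper does it in the plane $z=0$. There, $p<m+(\sigma+2)/N$ (a consequence of $m>m_c$ and $\sigma>\sigma_*$) makes the line $y=-(p-m)x/(\sigma+2)$ one-way. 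This forces $l_0$ below $y=-2/(1-m)$ and then below $y=-(\sigma+2)/(p-m)$, after which the no-return property gives the connection to $Q_5$. As it stands, nonemptiness of your vanishing set, and hence the connectedness argument, is unproved.

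The identification of the tail for the separatrix is only announced, and your sets, defined through properties of $f$, are not set up to deliver it. The paper relies on two phase-space facts:
\begin{enumerate}
\item The region $\mathcal{R}=\{y>-(\sigma+2)/(p-m),\ z>Z_0\}$ is positively invariant, and every orbit in it with $y<0$ later enters $\mathcal{R}_0=\{y>0,\ z>x\}$, i.e.\ the profile becomes increasing.
\item The plane $y=-(\sigma+2)/(p-m)$ is a no-return plane: a downward crossing forces convergence to $Q_5$, i.e.\ vanishing with $(f^m)'<0$.
\end{enumerate}
Defining the shooting sets by these open conditions makes openness immediate. You assert transversality of the zero without proof; it follows from the analysis at $Q_5$. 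It also confines any orbit in neither set to $0<z<Z_0$, $-(\sigma+2)/(p-m)<y<0$. A comparison in the variables $(x,z)$ then shows $y<-2/(1-m)$ eventually. It uses $dz/dx=(z/x)h(y)$ with $h(y)>(\sigma+2)/2$ for $-2/(1-m)<y<0$. Consequently $x$ is bounded, the orbit is precompact, and its $\omega$-limit can only be the saddle $Q_3$. This yields \eqref{decay} with $L=(mZ_0)^{1/(p-m)}$.

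Note that on this tail absorption is not negligible: it balances diffusion, while $\alpha f+\beta\xi f'$ vanishes identically on multiples of $\xi^{-(\sigma+2)/(p-m)}$. Finally, openness of your ``minimum'' set needs a global argument, but an elementary one exists. At a strict minimum $\xi_0^{\sigma}f^{p-1}\ge\alpha$, and this quantity keeps growing while $f$ increases. Any later critical point would then satisfy $mf^{m-1}f''=f(\xi^{\sigma}f^{p-1}-\alpha)>0$ and be a minimum again, which is impossible.
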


\noindent \textbf{Remark.} Observe that we do not say anything about the limit behavior of $f(\cdot;A)$ satisfying \eqref{increase} for $\xi>\xi_0(A)$. Actually, a formal analysis suggests that such profiles will have a vertical asymptote at some point $\xi_1(A)\in(\xi_0(A),\infty)$, with the local behavior
$$
f(\xi;A)\sim\begin{cases}
              C(\xi_1(A)-\xi)^{-2/(p-m)}, & \mbox{if } m+p\geq2 \\
              C(\xi_1(A)-\xi)^{-1/(p-1)}, & \mbox{if } m+p<2,
            \end{cases}
$$
where $C>0$ designs a (unique) constant depending on $m$, $p$ and $\xi_1(A)$. However, we refrain from a rigorous proof of this behavior in the paper.

Our next result, that is obtained by employing strongly the self-similar solutions classified in Theorem \ref{th.SSS}, says that \emph{any} bounded solution vanishes in finite time and with a lower extinction rate related to self-similarity.
\begin{theorem}[Finite time extinction and lower rates]\label{th.extinction}
Let $m$, $p$ and $\sigma$ be as in \eqref{range.exp} and let $u_0$ be as in \eqref{ic}. Then the solution $u$ to the Cauchy problem \eqref{eq1}-\eqref{ic} vanishes in a finite time $T(u_0)$ and there exists $C_1\in(0,\infty)$ such that
\begin{equation}\label{rate}
C_1(T(u_0)-t)^{\alpha}\leq\|u(t)\|_{\infty}, \quad t\in(0,T(u_0)).
\end{equation}
\end{theorem}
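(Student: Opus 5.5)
Both halves of the statement will be obtained by comparison with the self-similar solutions of Theorem \ref{th.SSS}, using the comparison principle for bounded weak solutions discussed after Definition \ref{def.weak}.

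\textbf{Step 1: a global supersolution that vanishes.} I would use the profile $f^*:=f(\cdot;A^*)$. It is positive and continuous on $[0,\infty)$ and decays like $L\xi^{-(\sigma+2)/(p-m)}$, so it satisfies $f^*\ge c_0>0$ on compact sets. Let $\lambda>0$ be a scaling parameter to be chosen. Equation \eqref{eq1} is invariant under
$$
u_\lambda(x,t)=\lambda^{\alpha/\beta}u(\lambda x,\lambda^{1/\beta}t),
$$
which can be checked directly from the definitions \eqref{SSexp} of $\alpha$ and $\beta$. Applied to $U_{A^*,T}$, this gives the family
$$
U_{A^*,T}(x,t)=(T-t)^{\alpha}f^*(|x|(T-t)^{\beta}).
$$
At $t=0$ this equals $T^{\alpha}f^*(|x|T^{\beta})$.

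\textbf{Step 2: the obstacle at spatial infinity.} The difficulty is that $T^{\alpha}f^*(|x|T^\beta)\sim LT^{\alpha-\beta(\sigma+2)/(p-m)}|x|^{-(\sigma+2)/(p-m)}$ decays at infinity, while $u_0$ is merely bounded. I would resolve this by noting that the power of $T$ in the tail is
$$
\alpha-\beta\frac{\sigma+2}{p-m}=\frac{\sigma+2}{(1-m)(\sigma-\sigma_*)}-\frac{\sigma+2}{(1-m)(\sigma-\sigma_*)}=0.
$$
So the tail is $L|x|^{-(\sigma+2)/(p-m)}$, independent of $T$, and the ordering at infinity cannot be fixed by enlarging $T$.

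To get around this, I would instead use the fact that $|x|^{-(\sigma+2)/(p-m)}$ is, up to a constant, the singular stationary solution. Using it together with the unbounded profiles of Theorem \ref{th.SSS} seems awkward. The cleaner route is a two-step argument. First I would show that any bounded solution becomes small at infinity after a positive time. The main tool is the universal bound from the ODE $u_t\le -|x|^\sigma u^p$, which by comparison with the spatially constant-in-$t$ supersolution $(|x|^{\sigma}(p-1)t)^{-1/(p-1)}$ gives
$$
u(x,t)\le \big((p-1)|x|^{\sigma}t\big)^{-1/(p-1)}.
$$
This comparison needs care because of diffusion; I would obtain it via a Keller--Osserman type local supersolution on annuli, using $\sigma>\sigma_*$ to ensure the spatial derivatives of this barrier are dominated by the absorption at large $|x|$. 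The resulting decay $|x|^{-\sigma/(p-1)}$ is faster than $|x|^{-(\sigma+2)/(p-m)}$ exactly when $\sigma(p-m)>(p-1)(\sigma+2)$, i.e. $\sigma>\sigma_*$.

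Hence at time $t_0=1$, $u(\cdot,1)\le C|x|^{-\sigma/(p-1)}$ for $|x|\ge1$ and $u(\cdot,1)\le\|u_0\|_\infty$ everywhere. I would then choose $T$ large so that $T^{\alpha}\min_{|\xi|\le T^\beta R}f^*\ge\|u_0\|_\infty$ on $|x|\le R$. This is possible since $T^\alpha f^*(RT^\beta)\approx LR^{-(\sigma+2)/(p-m)}$ for large $T$, so I would take $R$ small compared with the tail constant. Outside $|x|\ge R$ the faster decay is used, possibly after an additional translation in $\lambda$.

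Comparison then gives $u\le U_{A^*,T}$ for $t\ge1$, and $U_{A^*,T}$ vanishes at $t=T+1$, so $u$ vanishes in finite time.

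\textbf{Step 3: the lower rate \eqref{rate}.} Let $T=T(u_0)$ and suppose that for some $t_1<T$ we had $\|u(t_1)\|_\infty< \varepsilon(T-t_1)^{\alpha}$ with $\varepsilon$ small. Repeating Step 2 from time $t_1$, using the scaling of Step 1, would produce a self-similar supersolution vanishing at a time earlier than $T$, which is a contradiction. Here the constants must be scale-consistent, and this works because the bounds of Step 2 are scale invariant under the scaling of Step 1. This gives $C_1$.

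\textbf{Main obstacle.} I expect the main obstacle to be the tail comparison in Step 2: establishing rigorously the decay $u\lesssim |x|^{-\sigma/(p-1)}$ for bounded data, and matching it under the self-similar supersolution uniformly in the scaling.
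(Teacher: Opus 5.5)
Your Step 2 rests on an estimate that is false in the range \eqref{range.exp}. The bound $u(x,t)\le((p-1)|x|^{\sigma}t)^{-1/(p-1)}$ fails, and so does the weaker $u(\cdot,1)\le C|x|^{-\sigma/(p-1)}$ for $|x|\ge 1$. The counterexample is $U_{A^*,T}$ itself. For $T>1$ it is a solution with bounded initial datum $T^{\alpha}f^*(|x|T^{\beta})$. Yet, as you computed, $U_{A^*,T}(x,1)\sim L|x|^{-(\sigma+2)/(p-m)}$ as $|x|\to\infty$, which decays strictly slower than $|x|^{-\sigma/(p-1)}$ precisely because $\sigma>\sigma_*$.

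The heuristic behind your barrier is reversed. The ODE barrier $V=((p-1)|x|^{\sigma}t)^{-1/(p-1)}$ satisfies $\partial_tV-\Delta V^m+|x|^{\sigma}V^p=-\Delta V^m$. Here $\Delta V^m>0$, because $m\sigma/(p-1)>2m/(1-m)>N-2$ (the last inequality is $m>m_c$). Moreover $\Delta V^m/(|x|^{\sigma}V^p)\sim|x|^{(1-m)\sigma/(p-1)-2}\to\infty$ exactly when $\sigma>\sigma_*$. So in the tail, fast diffusion beats the absorption and refills it.

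A local Keller--Osserman argument on balls of radius comparable to $|x|$ gives at best $u\le K|x|^{-(\sigma+2)/(p-m)}$. That is the \emph{same} rate as the tail of $U_{A^*,T}$. The constant $K$ is invariant under the scaling and has no reason to lie below $L=(mZ_0)^{1/(p-m)}$ (Lemma \ref{lem.Q3}), which is the constant of the singular stationary solution \eqref{stat.sol}. All $U_{A^*,T}$ share this tail, and the scaling maps the family $\{U_{A^*,T}\}_{T>0}$ into itself. So neither enlarging $T$ nor rescaling yields $u(\cdot,1)\le U_{A^*,T}(\cdot,0)$ at infinity, and Step 2 does not close. (Also, the invariant scaling is $\lambda^{\alpha/\beta}u(\lambda x,\lambda^{-1/\beta}t)$; with $\lambda^{1/\beta}t$ the equation is not preserved.)

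The missing idea is exactly the one you dismissed as awkward. Take a profile $f(\cdot;A)$ with $A<A_*$ from Theorem \ref{th.SSS}; it has a positive minimum $f(\xi_0(A);A)$ and increases beyond $\xi_0(A)$. Then $U_{A,T}(x,0)\ge T^{\alpha}f(\xi_0(A);A)>\|u_0\|_\infty$ for \emph{every} $x$ once $T$ is large, so no tail comparison is needed at all. The truncation $\min\{U_{A,T},2AT^{\alpha}\}$ is a bounded supersolution lying above $u$, and it tends to $0$ locally uniformly as $t\to T$. The paper upgrades this to uniform extinction by first treating constant data (radial monotonicity, Lemma \ref{lem.dec}) and then comparing.

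Your Step 3, by contrast, contains a sound idea that differs from the paper's Part 2, which argues that $u$ and $U_{A,T(u_0)}$ cannot be ordered. Once the constant datum $1$ is known to vanish at some $T_1<\infty$, the scaling shows that the constant datum $M$ vanishes at $T_1M^{1/\alpha}$, since $\alpha=\beta(\sigma+2)/(p-m)$. Comparing $u(\cdot,t+s)$, $s\ge0$, with the solution starting from the constant $\|u(t)\|_\infty$ then gives $T(u_0)-t\le T_1\|u(t)\|_\infty^{1/\alpha}$. This is \eqref{rate} with the universal constant $C_1=T_1^{-\alpha}$, and it is more elementary than the non-ordering argument. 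But it needs a correct extinction proof underneath, which your Step 2 does not supply.
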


\noindent \textbf{Discussion. Upper extinction rates.} A large class of initial conditions for which the upper extinction rate
\begin{equation}\label{rate.upper}
\|u(t)\|_{\infty}\leq C_2(T(u_0)-t)^{\alpha}, \quad t\in(0,T(u_0))
\end{equation}
holds true for some $C_2>0$, is identified in the final discussion following the proof of Theorem \ref{th.extinction}. It contains, in particular, the constant initial conditions $u_0\equiv K>0$ and radially symmetric initial conditions such that $u_0(|x|)$ is non-increasing and satisfies
\begin{equation}\label{cond.upper}
\lim\limits_{|x|\to\infty}|x|^{(\sigma+2)/(p-m)}u_0(x)=\infty,
\end{equation}
together with a condition of intersection with the self-similar solution $U_{A^*,T(u_0)}$ corresponding to the profile $f(\cdot;A^*)$ introduced in Theorem \ref{th.SSS}.

As previously commented, the result of Theorem \ref{th.extinction} is the most striking effect of the variable coefficient $|x|^{\sigma}$. Indeed, letting $\sigma=0$ in Eq. \eqref{eq1} leads to Eq. \eqref{eq1.hom}, for which positivity of solutions at any point $x\in\real^N$ and time $t\in(0,\infty)$ is well-known. We are strongly convinced that \eqref{rate.upper} is a universal upper rate, but in order to prove it for any $u_0$, different techniques based on functional inequalities have to be designed, while the limitations in this paper stem from the techniques based on comparison that we employ.

Moreover, it is a well-established fact that self-similar solutions play a fundamental role in the development of the theory of nonlinear diffusion equations, both related to qualitative properties and to the large time behavior (or behavior as $t\to T(u_0)$ in our case) of more general solutions of it. Thus, the outcome of Theorem \ref{th.SSS} has independent interest and is very helpful in order to get an insight of what one expects to occur with a solution as $t$ approaches its vanishing time. Furthermore, the self-similar solutions introduced in Theorem \ref{th.SSS} are in fact essential as elements for comparison in the proof of Theorem \ref{th.extinction}, and the extinction rates suggest that self-similar solutions with the exact extinction rate $(T-t)^{\alpha}$ should play an important role in the study of the large time behavior.

\medskip

\noindent \textbf{Open problems and discussion.} A rather exceptional behavior of a self-similar profile such as \eqref{decay} (as we shall see in the proof) suggests that $A^*$ should be unique. However, establishing uniqueness of a self-similar solution in backward form \eqref{SSS} with some specific properties is rather a difficult task, as the number of similar uniqueness problems that remain open even for simpler equations (without variable coefficients) is rather large. The main difficulty in proving the uniqueness relies on finding some quantities related to the self-similar profiles $f(\cdot;A)$ depending in a monotone way with respect to the shooting parameter $A$. Indeed, contrary to the range $p>m>1$ studied in \cite{IM25a, IM25b}, any pair of self-similar profiles has to intersect at least once at intermediate points $\xi\in(0,\infty)$ and thus $f(\cdot;A)$ are no longer ordered with respect to $A$.

Closely related to the problem of uniqueness is the problem of behavior of the solutions near the vanishing time. It is strongly expected that at least a large class (if not all) of bounded solutions $u$ to Eq. \eqref{eq1} approach the self-similar solution $U_{A^*,T(u)}$ as $t\to T(u)$ (where $T(u)$ is the extinction time of $u$), provided $A^*$ is unique. However, even when knowing that $A^*$ is unique (that is, assuming that we have solved the first open question), proving the asymptotic convergence is not at all an easy task, and almost all techniques employed in such a convergence are based on the availability of Lyapunov functionals (if any). The interested reader might understand more about the above mentioned difficulties from the recent paper \cite{IL25b} in which, on a different equation but with some similar features, the uniqueness of the ``good" self-similar profile is established, after rather hard work, only in dimension $N=1$, and it is explained at the end why a proof of the convergence is still not available (even when the uniqueness has been proved).

\section{The dynamical system}

As explained in the Introduction, we first establish a classification of the self-similar solutions to Eq. \eqref{eq1}, that is, solutions to the initial value problem \eqref{SSODE}-\eqref{ic.ODE}. To this end, we introduce an alternative formulation of the differential equation \eqref{SSODE} in form of a three-dimensional autonomous dynamical system whose trajectories are equivalent to profiles with specific behavior. Let us thus introduce the change of variable
\begin{equation}\label{PSchange}
x(\xi)=\frac{\alpha}{m}\xi^2f(\xi)^{1-m}, \qquad y(\xi)=\frac{\xi f'(\xi)}{f(\xi)}, \qquad z(\xi)=\frac{1}{m}\xi^{\sigma+2}f^{p-m}(\xi),
\end{equation}
together with the new independent variable $\eta=\ln\,\xi$. We deduce after straightforward calculations that \eqref{SSODE} is mapped into the following system
\begin{equation}\label{PSsyst}
\left\{\begin{array}{ll}\dot{x}=x(2+(1-m)y), \\ \dot{y}=-x-(N-2)y+z-my^2-\frac{p-m}{\sigma+2}xy, \\ \dot{z}=z(\sigma+2+(p-m)y),\end{array}\right.
\end{equation}
where the dot derivatives are taken with respect to the independent variable $\eta=\ln\,\xi$. We observe that the planes $x=0$ and $z=0$ are invariant for the system \eqref{PSsyst} and that we are only interested in the half-spaces $x\geq0$ and $z\geq0$ in our analysis. Equating the vector field of the system \eqref{PSsyst} to zero, we obtain three equilibrium points:
\begin{equation}\label{crit.points}
Q_1=(0,0,0), \quad Q_2=\left(0,-\frac{N-2}{m},0\right), \quad Q_3=\left(0,-\frac{\sigma+2}{p-m},Z_0\right),
\end{equation}
with
\begin{equation}\label{Z0}
Z_0:=\frac{(\sigma+2)[m(N+\sigma)-p(N-2)]}{(p-m)^2}.
\end{equation}
Note that the system \eqref{PSsyst} has been employed with success in our previous work \cite{IM25a}. However, the analysis in the present work will be quite different from the one in this reference due to the change of sign of $m-1$.

\subsection{Analysis of the equilibrium points}

We perform in this section an analysis of the three critical points of the system \eqref{PSsyst} listed in \eqref{crit.points}. Some proofs will be completely identical to the corresponding ones in \cite{IM25a}. Assume first that $N\geq3$.
\begin{lemma}\label{lem.Q1}
The critical point $Q_1$ is a saddle point with a one-dimensional stable manifold contained in the $y$-axis and a two-dimensional unstable manifold. The trajectories contained in the unstable manifold of $Q_1$ form a one-parameter family with a behavior given in a first approximation by
\begin{equation}\label{lC}
(l_C): \ y(\eta)\sim-\frac{x(\eta)}{N}, \qquad z(\eta)\sim Cx(\eta)^{(\sigma+2)/2}, \qquad C\in[0,\infty),
\end{equation}
as $\eta\to-\infty$, and correspond to profiles $f$ with the local behavior
\begin{equation}\label{beh.Q1}
f(\xi)\sim\left(D-\frac{\alpha(m-1)}{2mN}\xi^2\right)^{1/(m-1)}, \qquad {\rm as} \ \xi\to0, \qquad D\in(0,\infty).
\end{equation}
The profile $f(\cdot;A)$ solution to \eqref{SSODE}-\eqref{ic.ODE} corresponds to the trajectory $l_C$ in the family \eqref{lC} with
\begin{equation}\label{bij}
A=(Cm)^{2/[(1-m)(\sigma_*-\sigma)]}\left(\frac{\alpha}{m}\right)^{(\sigma+2)/[(1-m)(\sigma_*-\sigma)]}.
\end{equation}
\end{lemma}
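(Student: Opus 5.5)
The plan is to linearize \eqref{PSsyst} at $Q_1=(0,0,0)$, read off the invariant manifolds, and then return to the profile variables through \eqref{PSchange}. The Jacobian at the origin is
$$
M(Q_1)=\left(\begin{array}{ccc}2&0&0\\-1&-(N-2)&1\\0&0&\sigma+2\end{array}\right),
$$
with eigenvalues $\lambda_1=2$, $\lambda_2=-(N-2)<0$ (since $N\geq3$) and $\lambda_3=\sigma+2>0$. Thus $Q_1$ is a hyperbolic saddle with a one-dimensional stable manifold and a two-dimensional unstable manifold.

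\textbf{Stable manifold.} The eigenvector for $\lambda_2$ is $e_2=(0,1,0)$. The $y$-axis is invariant, because on $x=z=0$ the system reduces to $\dot y=-(N-2)y-my^2$. Near the origin this axis flows into $Q_1$, so by uniqueness of the stable manifold it is exactly the $y$-axis.

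\textbf{Unstable manifold.} The eigenvectors are $e_1=(N,-1,0)$ for $\lambda_1=2$ and $e_3=(0,1,N+\sigma)$ for $\lambda_3=\sigma+2$. Since $\lambda_1\neq\lambda_3$, the orbits leaving $Q_1$ behave to first order as
$$
x\sim k_1e^{2\eta},\qquad z\sim k_3e^{(\sigma+2)\eta}\quad\text{as }\eta\to-\infty,
$$
so that $z\sim Cx^{(\sigma+2)/2}$ with $C=k_3/k_1^{(\sigma+2)/2}\geq0$. The case $C=0$ is the orbit lying in the invariant plane $\{z=0\}$. Because $\sigma+2>2$, the $z$ contribution to $y$ is of lower order. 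Substituting into the $y$ equation and balancing the $e^{2\eta}$ terms gives $2y=-x-(N-2)y$, hence $y\sim -x/N$. Together these give \eqref{lC}. To make this rigorous I would quote the stable manifold theorem, possibly with a smooth (or $C^1$) linearization near the saddle.

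\textbf{Back to profiles.} From $y\sim -x/N$ we get
$$
\frac{f'}{f}\sim-\frac{\alpha}{mN}\,\xi f^{1-m}.
$$
This is equivalent to $(f^{m-1})'\sim\frac{(1-m)\alpha}{mN}\xi$, and integrating yields \eqref{beh.Q1} with $D=f(0)^{m-1}$. Conversely, $f(0)=A>0$ and $f'(0)=0$ force $x\to0$, $z\to0$ and $y\to0$ as $\xi\to0$. Such a trajectory therefore emanates from $Q_1$ and lies in its unstable manifold rather than on the $y$-axis.

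\textbf{Identification of $C$.} Expanding at $\xi=0$,
$$
x\sim\frac{\alpha}{m}\xi^2A^{1-m},\qquad z\sim\frac{1}{m}\xi^{\sigma+2}A^{p-m}.
$$
Hence
$$
C=\lim\frac{z}{x^{(\sigma+2)/2}}=\frac{1}{m}\Big(\frac{\alpha}{m}\Big)^{-(\sigma+2)/2}A^{\,p-m-(1-m)(\sigma+2)/2}.
$$
The exponent of $A$ equals $(1-m)(\sigma_*-\sigma)/2$. Solving for $A$ gives \eqref{bij}, and since this exponent is nonzero the map $A\leftrightarrow C$ is a bijection of $(0,\infty)$.

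\textbf{Main obstacle.} The delicate part is the first-order asymptotics, specifically showing that the $z$ and quadratic terms do not perturb $y\sim -x/N$. This is resolved by the gap $\sigma+2>2$ in the unstable eigenvalues, as in \cite{IM25a}. The argument is independent of the sign of $m-1$, so I would follow that reference closely.
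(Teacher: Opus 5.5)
Your proposal is correct and follows the paper's route. The paper records the linearization $M(Q_1)$ and the unstable eigenvectors $e_1=(N,-1,0)$, $e_3=(0,1,N+\sigma)$, and defers the asymptotics to \cite[Lemma 2.4]{IM25a}; your computation of the exponent $(1-m)(\sigma_*-\sigma)/2$ reproduces \eqref{bij} exactly. Two points are worth tightening. First, $x\sim k_1e^{2\eta}$ and $z\sim k_3e^{(\sigma+2)\eta}$ come from the multiplicative form $\dot x=x(2+(1-m)y)$, $\dot z=z(\sigma+2+(p-m)y)$ together with the exponential decay of $y$ on the unstable manifold; this is what excludes resonance effects when $\sigma+2$ is an integer multiple of $2$, not $\lambda_1\neq\lambda_3$. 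Second, $D>0$ in \eqref{beh.Q1} follows from $x\to0$.
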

The proof is completely identical to the one of \cite[Lemma 2.4]{IM25a}, to which we refer the reader. In particular, for the sake of completeness, we remind here that the matrix of the linearization of the system \eqref{PSsyst} in a neighborhood of $Q_1$ is
$$
M(Q_1)=\left(
  \begin{array}{ccc}
    2 & 0 & 0 \\
    -1 & -(N-2) & 1 \\
    0 & 0 & \sigma+2 \\
  \end{array}
\right),
$$
and thus the unstable manifold is determined by the eigenvalues $\lambda_1=2$ and $\lambda_3=\sigma+2$, and (by the stable manifold theorem) it is tangent to the vector subspace spanned by the corresponding eigenvectors $e_1=(N,-1,0)$ and $e_3=(0,1,N+\sigma)$. Let us just observe a difference with respect to the previous reference: since $\sigma>\sigma_*$, the exponents in \eqref{bij} are negative. Thus, the value $f(0)=A$ is inversely proportional with the parameter $C>0$, which is the opposite order as in \cite{IM25a}. We shall denote throughout the paper by $l_{\infty}$ the unique trajectory contained at the same time in the unstable manifold of $Q_1$ and in the invariant plane $x=0$. This notation is coherent with $(l_C)_{C\in[0,\infty)}$, since if we write
$$
x(\eta)=\left(\overline{C}z(\eta)\right)^{2/(\sigma+2)}, \quad \overline{C}=\frac{1}{C},
$$
then $C\to\infty$ in \eqref{lC} corresponds to $\overline{C}\to0$, and thus $l_{\infty}$ is the trajectory with $\overline{C}=0$, thus contained in the invariant plane $x=0$.

We go now to the analysis of the critical points $Q_2$ and $Q_3$. Let us first observe that in the range of exponents \eqref{range.exp} we have
\begin{equation}\label{interm1}
m(N+\sigma)-p(N-2)>m(N+\sigma_*)-p(N-2)=\frac{(p-m)(mN-N+2)}{1-m}>0,
\end{equation}
since $m>m_c$ implies that $mN-N+2>0$. It thus follows that $Z_0>0$ and thus the critical point $Q_3$ is always well-defined in our range of exponents \eqref{range.exp}. For the critical point $Q_2$ the analysis is rather simple.
\begin{lemma}\label{lem.Q2}
The critical point $Q_2$ is an unstable node. The trajectories going out of it correspond to profiles presenting a vertical asymptote
\begin{equation}\label{beh.Q2}
f(\xi)\sim D\xi^{-(N-2)/m}, \quad {\rm as} \ \xi\to0, \quad D>0.
\end{equation}
\end{lemma}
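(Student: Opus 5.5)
We compute the linearization of the system \eqref{PSsyst} at $Q_2=(0,-(N-2)/m,0)$ and read off the local behavior of the profiles from its eigenvalues; here $N\geq3$, as assumed in this subsection. Write $y_2:=-(N-2)/m$. The Jacobian at $Q_2$ is
$$
M(Q_2)=\left(
  \begin{array}{ccc}
    2+(1-m)y_2 & 0 & 0 \\
    -1-\frac{p-m}{\sigma+2}y_2 & -(N-2)-2my_2 & 1 \\
    0 & 0 & \sigma+2+(p-m)y_2 \\
  \end{array}
\right),
$$
which is lower triangular apart from the entry $1$, itself in an upper-triangular position. The eigenvalues are therefore the diagonal entries
$$
\lambda_1=\frac{2m-(1-m)(N-2)}{m}=\frac{mN-N+2}{m},\qquad \lambda_2=N-2,\qquad \lambda_3=\frac{m(\sigma+2)-(p-m)(N-2)}{m}.
$$

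\textbf{Step 1: signs of the eigenvalues.} We have $\lambda_2=N-2>0$ since $N\geq3$, and $\lambda_1>0$ exactly because $m>m_c$, that is, $mN-N+2>0$. For $\lambda_3$, note that $m(\sigma+2)-(p-m)(N-2)=m(N+\sigma)-p(N-2)$, which is strictly positive by \eqref{interm1}. Hence all three eigenvalues are positive and $Q_2$ is an unstable node. If two eigenvalues coincide, the node may be degenerate, but it is still a repeller. Accordingly, the trajectories leaving $Q_2$ form a two-parameter family filling a neighborhood of $Q_2$ in the region $x\geq0$, $z\geq0$.

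\textbf{Step 2: the profiles.} Along any trajectory going out of $Q_2$ we have $y(\eta)\to-(N-2)/m$ as $\eta\to-\infty$, that is, $\xi f'(\xi)/f(\xi)\to-(N-2)/m$ as $\xi\to0$. This alone only yields $f(\xi)=\xi^{-(N-2)/m+o(1)}$. To obtain a genuine constant $D>0$ in \eqref{beh.Q2}, we use the exponential rate of convergence to the node: $|y(\eta)-y_2|=O(e^{\lambda\eta})$ for some $\lambda>0$, namely the smallest eigenvalue (possibly with a polynomial factor in the degenerate case). Integrating the identity
$$
\frac{d}{d\eta}\ln f=y=y_2+O(e^{\lambda\eta})
$$
over $(-\infty,\eta)$, the integral of the remainder converges. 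Hence $\ln f(\xi)+\frac{N-2}{m}\ln\xi$ has a finite limit as $\xi\to0$, which gives $f(\xi)\sim D\xi^{-(N-2)/m}$ with $D>0$.

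\textbf{Step 3: consistency check.} With this behavior, $x=\frac{\alpha}{m}\xi^{2}f^{1-m}\sim C\xi^{\lambda_1}\to0$ and $z=\frac{1}{m}\xi^{\sigma+2}f^{p-m}\sim C'\xi^{\lambda_3}\to0$. This agrees with the eigen-directions computed in Step 1.

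The only delicate point is the sign of $\lambda_3$, and it has already been settled by \eqref{interm1}. Everything else is a routine computation, analogous to the corresponding lemma in \cite{IM25a}.
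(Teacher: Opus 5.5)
Your proof is correct and follows the paper's outline: you compute the same Jacobian $M(Q_2)$, get positivity of its three diagonal entries from $N\geq3$, $m>m_c$ and \eqref{interm1}, and then undo \eqref{PSchange}. You genuinely differ from the paper in Step~2, and there your argument is the stronger one. The paper passes from $\xi f'(\xi)/f(\xi)\to-(N-2)/m$ to \eqref{beh.Q2} by L'H\^opital's rule. That only yields $\ln f(\xi)/\ln\xi\to-(N-2)/m$, i.e.\ $f(\xi)=\xi^{-(N-2)/m+o(1)}$, which does not rule out something like $f(\xi)=\xi^{-(N-2)/m}\ln(1/\xi)$. You instead use the fact that convergence to the hyperbolic source $Q_2$ as $\eta\to-\infty$ is exponential. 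Then $\frac{d}{d\eta}\bigl(\ln f+\frac{N-2}{m}\eta\bigr)=y-y_2$ is integrable at $-\infty$, and this is what actually produces a finite constant $D>0$; the paper's route is shorter but does not deliver the constant. Any bound $O(e^{\mu\eta})$ with $0<\mu<\min_i\lambda_i$ suffices here, so you need not rely on the finer claim about polynomial factors in the degenerate case. One small point of wording: the eigenvalues are indeed the diagonal entries, but the reason is that the $(3,2)$ entry $(p-m)z$ vanishes at $Q_2$, which makes the matrix lower triangular in the ordering $(x,z,y)$. Being lower triangular apart from a single entry above the diagonal is not enough on its own.
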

\begin{proof}
The linearization of the system \eqref{PSsyst} in a neighborhood of $Q_2$ has the matrix
$$
M(Q_2)=\left(
         \begin{array}{ccc}
           \frac{mN-N+2}{m} & 0 & 0 \\[1mm]
           \frac{(p-m)(N-2)}{m(\sigma+2)}-1 & N-2 & 1 \\[1mm]
           0 & 0 & \frac{m(N+\sigma)-p(N-2)}{m} \\
         \end{array}
       \right),
$$
and, since we are working in dimension $N\geq3$, we infer from \eqref{interm1} that all the three eigenvalues are positive. Since $y(\eta)\to-(N-2)/m$ as $\eta\to-\infty$, we deduce by undoing the change of variable \eqref{PSchange} that
$$
-\frac{N-2}{m}=\lim\limits_{\xi\to0}\frac{\xi f'(\xi)}{f(\xi)}=\lim\limits_{\xi\to0}\frac{(\ln\,f(\xi))'}{(\ln\,\xi)'}
$$
and an application of L'Hopital's rule leads to the local behavior \eqref{beh.Q2}, completing the proof.
\end{proof}
The critical point $Q_3$ will be of great interest throughout this paper, as it encodes the only possible tail behavior allowed for solutions to \eqref{SSODE} as $\xi\to\infty$.
\begin{lemma}\label{lem.Q3}
The critical point $Q_3$ is a saddle point with a one-dimensional unstable manifold contained in the invariant plane $x=0$ and a two-dimensional stable manifold. The trajectories contained in the stable manifold of $Q_3$ correspond to profiles $f$ solutions to \eqref{SSODE} with the tail behavior
\begin{equation}\label{beh.Q3}
\lim\limits_{\xi\to\infty}\xi^{(\sigma+2)/(p-m)}f(\xi)=(mZ_0)^{1/(p-m)}.
\end{equation}
\end{lemma}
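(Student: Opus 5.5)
The plan is to compute the Jacobian of \eqref{PSsyst} at $Q_3=(0,Y_0,Z_0)$, with $Y_0:=-(\sigma+2)/(p-m)$, and read off the eigenvalues. Differentiating the vector field at $Q_3$ and using $x=0$ and $\sigma+2+(p-m)Y_0=0$ gives
$$
M(Q_3)=\left(
  \begin{array}{ccc}
    2+(1-m)Y_0 & 0 & 0 \\
    -1-\frac{p-m}{\sigma+2}Y_0 & -(N-2)-2mY_0 & 1 \\
    0 & (p-m)Z_0 & 0 \\
  \end{array}
\right).
$$
The first row immediately gives the eigenvalue
$$
\lambda_1=2-\frac{(1-m)(\sigma+2)}{p-m}=\frac{2(p-m)-(1-m)(\sigma+2)}{p-m}=-\frac{(1-m)(\sigma-\sigma_*)}{p-m}<0,
$$
where we used $2(p-m)-2(1-m)=2(p-1)=(1-m)\sigma_*$. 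Its sign is negative precisely because $\sigma>\sigma_*$ and $m<1$; this is the point where the range \eqref{range.exp} enters.

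The remaining two eigenvalues come from the $2\times2$ block in the $(y,z)$ variables. This block is
$$
\left(\begin{array}{cc} -(N-2)+\frac{2m(\sigma+2)}{p-m} & 1\\ (p-m)Z_0 & 0\end{array}\right),
$$
and its determinant is $-(p-m)Z_0<0$, since $Z_0>0$ by \eqref{interm1}. The two eigenvalues are therefore real with opposite signs, $\lambda_2<0<\lambda_3$. Altogether $Q_3$ has two negative eigenvalues and one positive eigenvalue, so it is a hyperbolic saddle with a two-dimensional stable manifold and a one-dimensional unstable manifold.

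To locate the unstable manifold, note that the plane $\{x=0\}$ is invariant and contains $Q_3$. The restriction of the system to this plane has linearization equal to the $2\times2$ block above, which already carries the positive eigenvalue $\lambda_3$. Its eigenvector $(0,1,\lambda_3)$ therefore lies in the plane, and by uniqueness of the one-dimensional unstable manifold, that manifold coincides with the unstable manifold of the planar restriction. Hence it is contained in $\{x=0\}$.

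For the tail behavior, take any trajectory in the stable manifold with $x>0$. Along it $z(\eta)\to Z_0$ as $\eta\to\infty$, that is, as $\xi\to\infty$. Undoing \eqref{PSchange}, this means $\xi^{\sigma+2}f^{p-m}(\xi)\to mZ_0$, and taking the power $1/(p-m)$ gives \eqref{beh.Q3}. As a consistency check, $y\to Y_0$ gives the same decay exponent through the argument used in Lemma~\ref{lem.Q2}, and $x=\frac{\alpha}{m}\xi^2f^{1-m}\sim\xi^{2-(1-m)(\sigma+2)/(p-m)}\to0$, which agrees with the sign of $\lambda_1$.

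The only step needing care is the sign of $\lambda_1$; the rest is routine.
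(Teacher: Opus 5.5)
Your proposal is correct and follows the paper's proof essentially step by step: the same Jacobian at $Q_3$ (your $(2,1)$ entry $-1-\frac{p-m}{\sigma+2}Y_0$ is in fact $0$), $\lambda_1=-(1-m)(\sigma-\sigma_*)/(p-m)<0$, $\lambda_2\lambda_3=-(p-m)Z_0<0$, invariance of $\{x=0\}$ plus uniqueness of the unstable manifold, and $z\to Z_0$ undone through the change of variables. There is one small slip that does not affect the argument: the eigenvector of $\lambda_3$ is $(0,1,\lambda_3-a)=(0,1,(p-m)Z_0/\lambda_3)$ with $a=-(N-2)+2m(\sigma+2)/(p-m)$, not $(0,1,\lambda_3)$, but all you actually use is that its $x$-component vanishes.
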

\begin{proof}
The linearization of the system \eqref{PSsyst} in a neighborhood of $Q_3$ has the matrix
$$
M(Q_3)=\left(
         \begin{array}{ccc}
           \frac{(1-m)(\sigma_*-\sigma))}{p-m} & 0 & 0 \\[1mm]
           0 & \frac{m(N+2\sigma+2)-p(N-2)}{p-m} & 1 \\[1mm]
           0 & (p-m)Z_0 & 0 \\
         \end{array}
       \right),
$$
with eigenvalues satisfying
$$
\lambda_1=\frac{(1-m)(\sigma_*-\sigma)}{p-m}<0, \quad \lambda_2\lambda_3=-(p-m)Z_0<0,
$$
hence there are two negative eigenvalues and a positive one. Since the eigenvectors associated to $\lambda_2$ and $\lambda_3$ are both contained in the invariant plane $x=0$, the uniqueness of the unstable manifold (see for example \cite[Theorem 3.2.1]{GH}) entails that it is fully contained in the plane $x=0$. The two-dimensional stable manifold corresponds to profiles such that
$$
\lim\limits_{\eta\to\infty}z(\eta)=Z_0,
$$
which leads to \eqref{beh.Q3} after undoing \eqref{PSchange}.
\end{proof}

\noindent \textbf{Remark.} A particular trajectory contained in the stable manifold of $Q_3$ is the straight line
\begin{equation}\label{stat.sol.plane}
x(\eta)\in(0,\infty), \quad y(\eta)=-\frac{\sigma+2}{p-m}, \quad z(\eta)=Z_0,
\end{equation}
which corresponds to the stationary solution
\begin{equation}\label{stat.sol}
f(\xi)=C_0\xi^{-(\sigma+2)/(p-m)}, \quad C_0=(mZ_0)^{1/(p-m)}.
\end{equation}
The trajectory \eqref{stat.sol.plane} will play a significant role in the forthcoming analysis.

\medskip

\noindent \textbf{Differences for $N=2$ and $N=1$.} For $N=2$, the critical points $Q_2$ and $Q_1$ coincide and the resulting point is a saddle-node with an infinity of center manifolds tangent to the $y$-axis. However, the unstable manifold of $Q_1$ spanned by the eigenvectors $e_1$ and $e_3$ and containing the trajectories $(l_C)_{C\in(0,\infty)}$ as in Lemma \ref{lem.Q1} remains unaffected. For $N=1$, the critical point $Q_2$ lies in the positive half-space $y>0$ and the critical point $Q_1$ becomes an unstable node. But even in this case, the unstable manifold composed by the trajectories $(l_C)_{C\in(0,\infty)}$ is identified, according to \cite[Lemma 2.6]{IM25a}. The rest of the proofs will make no difference in dimensions $N=1$ and $N=2$.

\subsection{Critical points at infinity}

The analysis of the possible behaviors of solutions to \eqref{SSODE} is completed by the analysis of the critical points at infinity. Following the theory in \cite[Section 3.10]{Pe}, these critical points are seen on a compactification of the phase space to the Poincar\'e hypersphere by setting
$$
x=\frac{\overline{x}}{w}, \qquad y=\frac{\overline{y}}{w}, \qquad z=\frac{\overline{z}}{w}.
$$
According to \cite[Theorem 4, Section 3.10]{Pe}, the critical points at infinity of the system \eqref{PSsyst} are then given by the following system:
\begin{equation}\label{Poincare}
\left\{\begin{array}{ll}\overline{x}\overline{y}[(\sigma+2)\overline{y}+(p-m)\overline{x}]=0,\\
(p-1)\overline{x}\overline{y}\overline{z}=0,\\
\overline{y}\overline{z}[p(\sigma+2)\overline{y}+(p-m)\overline{x}]=0,\end{array}\right.
\end{equation}
together with the condition of belonging to the equator of the hypersphere, which implies $w=0$ and thus the additional equation $\overline{x}^2+\overline{y}^2+\overline{z}^2=1$. By solving \eqref{Poincare}, we deduce the existence of several critical points lying in the compactification of the region we are interested in, more precisely,
$$
Q_4=(1,0,0,0), \quad Q_5=(0,-1,0,0), \quad Q_6=(0,1,0,0), \quad Q_7=(0,0,1,0)
$$
and
$$
Q_8=\left(\frac{\sigma+2}{\sqrt{(\sigma+2)^2+(p-m)^2}},-\frac{p-m}{\sqrt{(\sigma+2)^2+(p-m)^2}},0,0\right), \quad Q_{\gamma}=(\gamma,0,\sqrt{1-\gamma^2},0),
$$
the latter being a one-parameter family of points for $\gamma\in(0,1)$. Despite the rather large number of critical points, we will only be interested in the critical point $Q_5$ in our analysis, and thus it will be the only critical point discussed in the next lines.
\begin{lemma}
The critical point $Q_5$ is a stable node. The trajectories of the system \eqref{PSsyst} entering it from the finite part of the phase space correspond to profiles $f$ such that there exists $\xi_0\in(0,\infty)$ with the properties
\begin{equation}\label{beh.Q5}
f(\xi_0)=0, \quad (f^m)'(\xi_0)<0, \quad f(\xi)>0 \ {\rm for} \ \xi\in(\xi_0-\delta,\xi_0),
\end{equation}
for some $\delta\in(0,\xi_0)$.
\end{lemma}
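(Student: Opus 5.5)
To analyze $Q_5$, I would project the system \eqref{PSsyst} onto the chart of the Poincaré hypersphere adapted to the $\overline{y}$-direction, following \cite[Theorem 5, Section 3.10]{Pe}. Since $Q_5=(0,-1,0,0)$, the relevant chart is obtained by dividing by $-y$ (with $y<0$ near $Q_5$). I would introduce the new variables
$$
X=-\frac{x}{y},\qquad Z=-\frac{z}{y},\qquad W=-\frac{1}{y},
$$
so that $Q_5$ becomes the origin $(X,Z,W)=(0,0,0)$, together with a new independent variable $d\eta_1=-y\,d\eta$.

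The first step is the computation of the system in this chart. From $\dot y=-my^2+O(|y|)$ and $\dot x=(1-m)xy+2x$, a straightforward computation should give, up to positive time rescaling, a system of the form
$$
\dot X=-X+\ldots,\qquad \dot Z=-(p-1)Z+\ldots,\qquad \dot W=-mW+\ldots .
$$
Here the dots denote higher order terms in $(X,Z,W)$. I would verify the linear part carefully, which I expect to be exactly $\dot X=X(-1-\frac{p-m}{\sigma+2}X+\cdots)$ and similar, with the sign convention fixed by $y<0$. The exponents are as follows:
\begin{itemize}
\item for $X$: $(1-m)-m$ with a minus sign, giving $-1$;
\item for $Z$: $(p-m)-m$, giving $-p$ or $-(p-1)$ depending on the normalization;
\item for $W$: $-m$.
\end{itemize}
In any case, all three eigenvalues will be negative, since $p>1>m>0$. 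By the Hartman–Grobman theorem, $Q_5$ is then a stable node.

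The second step is to translate the behavior along the trajectories back into properties of profiles. On a trajectory entering $Q_5$ we have $y\to-\infty$, with $\dot y\sim -my^2$ in $\eta$. Integrating this relation shows that $y$ blows up at a finite $\eta_0=\ln\xi_0$, with
$$
y(\eta)\sim \frac{1}{m(\eta-\eta_0)}.
$$
Moreover, $x/y\to0$ and $z/y\to0$, and $x$ remains bounded. The last point follows from $\dot x=x(2+(1-m)y)$ with $\int y\,d\eta\sim\frac1m\ln(\eta_0-\eta)$, which is integrable to a finite (indeed vanishing) limit. Because $x=\frac{\alpha}{m}\xi^2f^{1-m}$ and $\xi\to\xi_0>0$, this already gives $f(\xi)\to0$ as $\xi\to\xi_0^-$. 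More precisely, the relation $\xi f'/f\sim 1/(m\ln(\xi/\xi_0))$ yields $f^m\sim K(\xi_0-\xi)$. Hence
$$
(f^m)'=mf^{m-1}f'=\frac{m f^m y}{\xi}\to -\frac{K}{\xi_0}\cdot\ldots<0 .
$$
To make this rigorous, I would write
$$
(f^m)'(\xi)=\frac{m}{\xi}f^m y=\frac{m}{\xi}\Bigl(\frac{mx}{\alpha\xi^2}\Bigr)^{m/(1-m)}y,
$$
and show that the quantity $x^{m/(1-m)}y$ has a finite nonzero limit. This reduces to
$$
\frac{d}{d\eta}\ln\bigl(x^{m/(1-m)}|y|\bigr)=\frac{m}{1-m}\bigl(2+(1-m)y\bigr)+\frac{\dot y}{y},
$$
in which the $my$ terms cancel: indeed $\frac{\dot y}{y}=-my+O(1)+O(x)+O(z/y)$. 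The remainder is therefore integrable near $\eta_0$.

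The main obstacle is precisely this last cancellation. One must check that the remaining terms, namely
$$
-\frac{x}{y},\qquad \frac{z}{y},\qquad -\frac{p-m}{\sigma+2}x,\qquad -(N-2),
$$
are integrable up to $\eta_0$. They are, since $\eta_0$ is finite and $x,z$ stay bounded. It remains to justify that $z$ stays bounded, which follows from $\dot z=z(\sigma+2+(p-m)y)$ with $y\to-\infty$. These integrability claims yield $(f^m)'(\xi_0)\in(-\infty,0)$, and thus \eqref{beh.Q5}.
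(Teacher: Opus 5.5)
Your proof is correct. The first half is the paper's: you pass to the chart of the Poincar\'e sphere at $Q_5$ (dividing by $-y$ where the paper divides by $y$) and linearize. Your eigenvalue bookkeeping needs fixing, though. In your chart the linear part is exactly $(-X,-pZ,-mW)$, the same as in \eqref{systinf}. The first two eigenvalues come from $-(1-m)-m=-1$ and $-(p-m)-m=-p$, not from $(1-m)-m$ or $(p-m)-m$, so there is no ``$-p$ or $-(p-1)$'' ambiguity.

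The second half genuinely differs in execution. The paper stays in the chart: it uses $W\sim e^{-m\eta_1}$ to get convergence of the $\eta$-integral, then asserts $W\sim|X|^{m-1}X$ and concludes with L'H\^opital. You work directly in $(x,y,z)$. First, $\dot y\sim -my^2$ forces blow-up of $y$ at a finite $\eta_0$. Next, it is the divergence of $\int y\,d\eta$ to $-\infty$ that gives $x\to0$ and hence $f\to0$; your wording (``integrable to a finite limit'') is off here, but the conclusion is right. Finally, the exact cancellation
\begin{equation*}
\frac{d}{d\eta}\ln\bigl(x^{m/(1-m)}|y|\bigr)=\frac{2m}{1-m}-(N-2)-\frac{x}{y}+\frac{z}{y}-\frac{p-m}{\sigma+2}\,x
\end{equation*}
has a right-hand side that is bounded on a finite interval; you omitted the harmless constant $2m/(1-m)$ from your list. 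This shows $x^{m/(1-m)}|y|\to\ell\in(0,\infty)$, and hence $\lim_{\xi\to\xi_0^-}(f^m)'(\xi)\in(-\infty,0)$ directly, without L'H\^opital.

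Since $|W|/|X|^m=(x^{m/(1-m)}|y|)^{-(1-m)}$, your cancellation is an actual proof of the relation the paper reads off from the linearization. It also has the merit of leaving the constant free. The proportionality constant in $W\sim|X|^{m-1}X$ is not fixed by the linearization and varies with the trajectory: profiles with $f(\xi_0)=0$ and $(f^m)'(\xi_0)=-K$ exist for every $K>0$, filling the open basin of the node. So the explicit value in \eqref{interm6} is just one instance, whereas your conclusion, a finite negative limit, is exactly what \eqref{beh.Q5} requires.
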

\begin{proof}
Following \cite[Theorem 5(b), Section 3.10]{Pe}, the flow of the system \eqref{PSsyst} in a neighborhood of $Q_5$ is topologically equivalent to the flow of the system
\begin{equation}\label{systinf}
\left\{\begin{array}{ll}\dot{X}=-X-\frac{p-m}{\sigma+2}X^2-X^2W-NXW+XZW,\\
\dot{Z}=-pZ-\frac{p-m}{\sigma+2}XZ-(N+\sigma)ZW+Z^2W-XZW,\\
\dot{W}=-mW-(N-2)W^2-\frac{p-m}{\sigma+2}XW-XW^2+ZW^2,\end{array}\right.
\end{equation}
in a neighborhood of the origin of it. Note that the system \eqref{systinf} is obtained by performing the change of variable
\begin{equation}\label{PSchange2}
X=\frac{x}{y}, \quad Z=\frac{z}{y}, \quad W=\frac{1}{y},
\end{equation}
and derivatives are taken with respect to the new independent variable defined implicitly by
\begin{equation}\label{interm5}
\eta_1=\int_0^{\eta}y(s)\,ds, \quad {\rm or, \ equivalently}, \quad \eta=\int_{0}^{\eta_1}\frac{1}{y(s)}\,ds.
\end{equation}
The fact that $Q_5$ is a stable node follows trivially from the linearization of the system \eqref{systinf}. For the local behavior, we observe first that the third equation of the system \eqref{systinf} gives, in a first approximation, that
$$
W(\eta_1)=\frac{1}{y(\eta_1)}\sim e^{-m\eta_1}, \quad {\rm as} \ \eta_1\to\infty
$$
and thus the integral defining $\eta$ in \eqref{interm5} is convergent at infinity. It follows that $\eta\to\eta_0\in\real$ as $\eta_1\to\infty$, hence there is $\xi_0=e^{\eta_0}\in(0,\infty)$ such that $f(\xi)\to0$ as $\xi\to\xi_0$. Moreover, we deduce from \eqref{systinf} that, in a neighborhood of its origin, we have
$$
W(\eta_1)\sim|X(\eta_1)|^{m-1}X(\eta_1),
$$
and, by undoing the change of variable \eqref{PSchange2} and then \eqref{PSchange}, we find after easy algebraic manipulations that
\begin{equation}\label{interm6}
\lim\limits_{\xi\to\xi_0}(f^{m})'(\xi)=-m\left(\frac{\alpha}{m}\right)^{m/(m-1)}\xi_0^{(m+1)/(m-1)}<0.
\end{equation}
An application of L'Hopital's rule as $\xi\to\xi_0$, $\xi<\xi_0$, gives that
$$
\lim\limits_{\xi\to\xi_0}\frac{f^m(\xi)}{\xi_0-\xi}=m\left(\frac{\alpha}{m}\right)^{m/(m-1)}\xi_0^{(m+1)/(m-1)}\equiv K_0,
$$
whence $f(\xi)\sim K_0(\xi_0-\xi)^{1/m}$ as $\xi\to\xi_0$, which, together with \eqref{interm6}, implies \eqref{beh.Q5}.
\end{proof}

\section{Self-similar solutions with finite time extinction}

This section is devoted to the proof of Theorem \ref{th.SSS}, and the strategy of the proof is to perform a global analysis of the trajectories of the system \eqref{PSsyst} contained in the unstable manifold of the critical point $Q_1$. Recall that these trajectories are denoted by $(l_C)_{C\in(0,\infty)}$, as defined in \eqref{lC}, and their correspondence to profiles $f(\cdot;A)$ solving the initial value problem \eqref{SSODE}-\eqref{ic.ODE} is established in \eqref{bij}. The proof is divided into following steps, that are stated in advance as preparatory results.

\subsection{Some invariant regions}

The core of the proof of Theorem \ref{th.SSS} is given by the construction of two invariant regions introduced in the next result. Let us recall that $Z_0$ is defined in \eqref{Z0}.
\begin{proposition}\label{prop.inv}
The regions
\begin{equation}\label{regionR}
\mathcal{R}:=\left\{(x,y,z)\in\real^3: -\frac{\sigma+2}{p-m}<y<\infty, \ z\in(Z_0,\infty)\right\}
\end{equation}
and
\begin{equation}\label{regionR0}
\mathcal{R}_0:=\{(x,y,z)\in\real^3: y>0, z>x\}
\end{equation}
are positively invariant for the system \eqref{PSsyst}. Moreover, there is no value of $C\in(0,\infty)$ such that the trajectory $l_C$ given in \eqref{lC} is tangent to either one of the planes $z=Z_0$ or $y=-(\sigma+2)/(p-m)$.
\end{proposition}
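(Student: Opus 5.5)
The plan is to check the sign of the vector field of \eqref{PSsyst} on each boundary piece of $\mathcal{R}$ and $\mathcal{R}_0$ (restricting to $x\geq0$, $z\geq0$). For the corners and the tangency claim I would use the uniqueness of trajectories together with the invariant straight line \eqref{stat.sol.plane}. Throughout I write $k:=(\sigma+2)/(p-m)$.

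\textbf{The region $\mathcal{R}$.} Its boundary consists of two pieces.
\begin{enumerate}
\item On the plane $z=Z_0$ with $y>-k$, the third equation gives $\dot z=Z_0(p-m)(y+k)>0$, so the flow crosses strictly into $\{z>Z_0\}$.
\item On the plane $y=-k$, the key observation is that the $x$-terms cancel: $-x+\frac{p-m}{\sigma+2}xk=0$. Hence
$$
\dot y=z+(N-2)k-mk^2 .
$$
Since $Q_3$ is an equilibrium, the constant $(N-2)k-mk^2$ equals $-Z_0$, so $\dot y=z-Z_0$. This is positive for $z>Z_0$.
\item The corner line $\{y=-k,\,z=Z_0\}$ is exactly the invariant line \eqref{stat.sol.plane} (plus the point $Q_3$). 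By uniqueness for the ODE, no trajectory starting in $\mathcal{R}$ can reach it in finite time.
\end{enumerate}
Together these give positive invariance of $\mathcal{R}$.

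\textbf{The region $\mathcal{R}_0$.} Again I check each boundary piece.
\begin{enumerate}
\item On $y=0$ with $z>x$, we have $\dot y=z-x>0$.
\item On $z=x$ with $y>0$ and $x>0$, we compute
$$
\frac{d}{d\eta}(z-x)=x\big(\sigma+2+(p-m)y\big)-x\big(2+(1-m)y\big)=x\big(\sigma+(p-1)y\big)>0 .
$$
\item The degenerate points with $x=z=0$ lie on the invariant $y$-axis. Uniqueness again prevents any trajectory of $\mathcal{R}_0$ from reaching them.
\item On the corner $\{y=0,\,z=x>0\}$, both first derivatives vanish or are nonnegative, so I would look at second derivatives. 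Here $\frac{d}{d\eta}(z-x)=\sigma x>0$, and $\ddot y=-\dot x+\dot z=(\sigma+2)z-2x=\sigma x>0$. Thus the trajectory enters $\mathcal{R}_0$ immediately.
\end{enumerate}
Hence $\mathcal{R}_0$ is positively invariant.

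\textbf{The tangency claim.} Suppose $l_C$ is tangent to the plane $z=Z_0$ at some point. Tangency forces $\dot z=0$ there, so $y=-k$ at that point. Similarly, tangency to $y=-k$ forces $\dot y=0$, which by the computation above means $z=Z_0$. In either case the tangency point lies on the invariant line \eqref{stat.sol.plane} (or is $Q_3$ itself). By uniqueness, $l_C$ would then coincide with that line or with $Q_3$. This is impossible, since $l_C$ emanates from $Q_1$ as $\eta\to-\infty$, with $y\to0$ and $z\to0$.

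\textbf{Main obstacle.} The only genuinely delicate points are the degenerate boundary pieces: the corner line of $\mathcal{R}$, the corner of $\mathcal{R}_0$, and the set $x=z=0$. These are handled by the invariance and uniqueness argument and by the second-derivative computation above.
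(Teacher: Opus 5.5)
Your proposal is correct and follows the paper's proof essentially step for step. It uses the same signs of the normal component of the flow on each face: $Z_0(\sigma+2+(p-m)y)$ on $z=Z_0$, $z-Z_0$ on $y=-(\sigma+2)/(p-m)$, $z-x$ on $y=0$ and $z(\sigma+(p-1)y)$ on $z=x$, together with the same uniqueness argument through the invariant line \eqref{stat.sol.plane} for the non-tangency claim. Your explicit handling of the corner sets, which the paper passes over, is a useful addition; at the edge $\{y=0,\ z=x>0\}$ the strict sign $\frac{d}{d\eta}(z-x)=\sigma x>0$ already rules out exit, so you do not need the second derivative of $y$ there.
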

\begin{proof}
The direction of the flow of the system \eqref{PSsyst} across the plane $z=Z_0$ (with normal direction $(0,0,1)$) is given by the sign of the expression
$$
F_1(y)=Z_0(\sigma+2+(p-m)y)>0,
$$
provided $y>-(\sigma+2)/(p-m)$. The direction of the flow of the system \eqref{PSsyst} across the plane $\{y=-(\sigma+2)/(p-m)\}$ (with normal direction $(0,1,0)$) is given by the sign of the expression
\begin{equation}\label{interm2}
F_2(z)=z+\frac{(N-2)(\sigma+2)}{p-m}-m\left(\frac{\sigma+2}{p-m}\right)^2=z-Z_0>0,
\end{equation}
provided $z>Z_0$. The previous signs indicate that, once a trajectory entered the region $\mathcal{R}$, it cannot leave that region, as its ``walls" have opposite direction of the flow. Thus, $\mathcal{R}$ is a positively invariant region. Passing now to the region $\mathcal{R}_0$, we observe in an analogous way that the flow of the system \eqref{PSsyst} across the plane $\{y=0\}$ (with normal vector $(0,1,0)$) has the direction given by the sign of the expression $z-x>0$ in $\mathcal{R}_0$, while the flow of the system \eqref{PSsyst} across the plane $z=x$ (with normal vector $(-1,0,1)$) has the direction given by the sign of the expression
$$
F_3(y,z)=z(\sigma+2+(p-m)y)-z(2-(m-1)y)=z(\sigma+(p-1)y)>0,
$$
provided $y>0$. We thus infer that $\mathcal{R}_0$ is a positively invariant region as well.

Assume next for contradiction that there is $C\in(0,\infty)$ and $\eta_0\in\real$ such that, on the trajectory $l_C$, we have $y(\eta_0)=-(\sigma+2)/(p-m)$, $y'(\eta_0)=0$ and $x(\eta_0)>0$. It follows by evaluating the second equation of the system \eqref{PSsyst} at $\eta=\eta_0$ that $z(\eta_0)=Z_0$, hence the point $(x(\eta_0),y(\eta_0),z(\eta_0))$ belongs to the trajectory \eqref{stat.sol.plane}. Since that point is not an equilibrium one for the system \eqref{PSsyst}, the uniqueness of a trajectory passing through it implies that $l_C$ coincides with \eqref{stat.sol.plane}, which is a contradiction since \eqref{stat.sol.plane} does not belong to the unstable manifold of $Q_1$. In a completely analogous way, we show that, if there is $\eta_0$ such that $z(\eta_0)=z_0$ and $z'(\eta_0)=0$, then $y(\eta_0)=-(\sigma+2)/(p-m)$ and once again the trajectory coincides with \eqref{stat.sol.plane}. This shows that no trajectory $l_C$ for some $C\in(0,\infty)$ can be tangent to the two planes $z=Z_0$ or $y=-(\sigma+2)/(p-m)$, completing the proof.
\end{proof}
The following preparatory result shows that the plane $y=-(\sigma+2)/(p-m)$ is a ``no return plane" for the trajectories crossing it in the decreasing direction of $y$.
\begin{proposition}\label{prop.noret}
Let $(x(\eta),y(\eta),z(\eta))$ be a trajectory of the system \eqref{PSsyst} such that there exist $\eta_1$, $\eta_2\in\real$ with the properties
$$
\eta_1<\eta_2, \quad y(\eta_1)>-\frac{\sigma+2}{p-m}, \quad y(\eta_2)<-\frac{\sigma+2}{p-m}.
$$
Then $y(\eta)<-(\sigma+2)/(p-m)$ for any $\eta>\eta_2$. Moreover, the trajectory under consideration connects to the critical point $Q_5$.
\end{proposition}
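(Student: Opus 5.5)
Set $y_0:=-(\sigma+2)/(p-m)$. On the plane $\{y=y_0\}$ the normal component of the flow is $\dot y=z-Z_0$, as computed in \eqref{interm2}. Along $\{y=y_0\}$ we also have $\dot z=z(\sigma+2+(p-m)y_0)=0$, so $z$ is stationary on that plane. The proof will rest on a region in which $y<y_0$ and $z<Z_0$ jointly persist.

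\textbf{Step 1: the first crossing.} Let $\eta_3\in(\eta_1,\eta_2)$ be the last time with $y(\eta_3)=y_0$. Then $\dot y(\eta_3)\le0$, i.e. $z(\eta_3)\le Z_0$. If $\dot y(\eta_3)=0$, then $z(\eta_3)=Z_0$ and the point lies on the trajectory \eqref{stat.sol.plane}; by uniqueness our trajectory would coincide with that line, on which $y\equiv y_0$, contradicting $y(\eta_2)<y_0$. (If $x=0$ the point would be $Q_3$, which is also impossible.) Hence $z(\eta_3)<Z_0$.

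\textbf{Step 2: the region $\mathcal{S}:=\{y<y_0,\ z<Z_0\}$ is positively invariant.} Inside $\mathcal{S}$ we have $\dot z=z(\sigma+2+(p-m)y)<0$ whenever $z>0$. So $z$ decreases and cannot reach $Z_0$; the plane $z=0$ is invariant and $z\ge0$ is preserved. On the wall $y=y_0$ with $z<Z_0$ we have $\dot y=z-Z_0<0$, so trajectories cannot exit through it. Immediately after $\eta_3$ the trajectory lies in $\mathcal{S}$, and therefore stays there for all later $\eta$. This gives the first claim, $y(\eta)<y_0$ for every $\eta>\eta_2$.

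\textbf{Step 3: connection to $Q_5$.} I expect this to be the main obstacle. In $\mathcal{S}$, $\dot x=x(2+(1-m)y)$, $z$ is decreasing and bounded, and
\[
\dot y\le -x-(N-2)y+Z_0-my^2-\tfrac{p-m}{\sigma+2}xy.
\]
The difficulty is the term $-\frac{p-m}{\sigma+2}xy$, which is positive since $y<0$. I would first show that $\dot y<0$ persists once $y<y_0$, by differentiating at a hypothetical point where $\dot y=0$. There $\ddot y=-\dot x+\dot z-\dots$, and $\dot z<0$ should give the right sign; this mirrors the no-tangency argument of Proposition \ref{prop.inv}.

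Once $y$ is monotone, either $y\to-\infty$, or $y$ tends to a finite limit $y_\infty<y_0$, forcing convergence to a finite critical point. But $Q_1,Q_2,Q_3$ all have $y\ge y_0$ or lie outside $\mathcal{S}$. (For $Q_2$, $-(N-2)/m>y_0$ by \eqref{interm1}.) The finite-limit case is therefore impossible, so $y\to-\infty$.

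\textbf{Step 4: identifying the limit point at infinity.} Next I would rule out $x\to\infty$ comparably to $y$, i.e. approach to $Q_8$. Since $y<y_0$ gives $2+(1-m)y<2-(1-m)(\sigma+2)/(p-m)$, and this is negative because $\sigma>\sigma_*$, we get that $x$ decays exponentially. Likewise $z$ decays. Thus $X=x/y\to0$ and $Z=z/y\to0$, and the trajectory enters the stable node $Q_5$, whose local analysis was carried out in the preceding Lemma.

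If $x$ is not small, the term $-my^2$ dominates for large $|y|$, giving a blow-up of $y$ to $-\infty$ in finite $\eta$. This is consistent with the local analysis at $Q_5$, i.e. $f$ vanishing at a finite $\xi_0$ as in \eqref{beh.Q5}.
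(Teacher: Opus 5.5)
Your proposal is correct and follows the paper's skeleton. That skeleton is: a crossing of $\{y=y_0\}$ at a point with $z<Z_0$; $z$ decreasing while $y<y_0$, so the plane cannot be recrossed; monotonicity of $y$; no finite equilibrium with $y<y_0$; hence $y\to-\infty$ with $x,z$ bounded, i.e.\ entrance into $Q_5$. Your Step 1 is more careful than the paper's Bolzano argument, since you exclude a tangential crossing via the invariant line \eqref{stat.sol.plane}.

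The genuine difference is how monotonicity of $y$ is obtained. The paper asserts $\dot y<0$ through a pointwise bound which, as written, discards the term $-x\bigl(1+\frac{p-m}{\sigma+2}y\bigr)$. For $y<y_0$ and $x>0$ this term is positive, which is exactly the difficulty you flagged. In fact $\dot y>0$ at some points of $\{y<y_0,\ z<Z_0\}$ with $x$ large, so no pointwise estimate can work and a trajectory-based argument like yours is needed.

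Carry out the computation fully, however. At a point where $\dot y=0$,
\begin{align*}
\ddot y&=\dot z-\Bigl(1+\frac{p-m}{\sigma+2}\,y\Bigr)\dot x\\
&=z\bigl(\sigma+2+(p-m)y\bigr)-x\bigl(2+(1-m)y\bigr)\Bigl(1+\frac{p-m}{\sigma+2}\,y\Bigr).
\end{align*}
The term $-\dot x$ taken alone, as in your sketch, has the wrong sign. What makes $\ddot y\le0$ is the whole product, in which $2+(1-m)y<0$ (this is where $\sigma>\sigma_*$ enters) and $1+\frac{p-m}{\sigma+2}y<0$. The sign of $\dot z$ is not the key point. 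It gives nothing in the invariant plane $z=0$, e.g.\ for $l_0$, which is the case invoked in Proposition \ref{prop.limits}. On the $y$-axis itself, $\dot y=-y(N-2+my)<0$ directly.

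A clean way to package this is to write $\frac{d}{d\eta}\dot y=a(\eta)\dot y+b(\eta)$, with $a=-(N-2+2my+\frac{p-m}{\sigma+2}x)$ and $b=\dot z-(1+\frac{p-m}{\sigma+2}y)\dot x\le0$ for $\eta\ge\eta_3$. Together with $\dot y(\eta_3)=z(\eta_3)-Z_0<0$, this gives $\dot y(\eta)\le\dot y(\eta_3)\,e^{\int_{\eta_3}^{\eta}a(s)\,ds}<0$.

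Finally, since $x$ and $z$ decrease after $\eta_3$, they stay bounded. So $|y|\to\infty$ alone forces $X,Z,W\to0$, and your discussion of $Q_8$ and the closing remark are not needed.
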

\begin{proof}
As it follows from \eqref{interm2}, the direction of the flow of the system \eqref{PSsyst} across the plane $y=-(\sigma+2)/(p-m)$ is given by the sign of $z-Z_0$. We deduce from the statement and Bolzano's Theorem that there is $\eta_0\in(\eta_1,\eta_2)$ such that
$$
y(\eta_0)=-(\sigma+2)/(p-m), \quad z(\eta_0)<Z_0,
$$
in order for the trajectory to cross the plane $y=-(\sigma+2)/(p-m)$. Furthermore, the third equation in \eqref{PSsyst} gives that $z$ is a decreasing function of $\eta$ while $y(\eta)<-(\sigma+2)/(p-m)$, while in order to cross back the plane $y=-(\sigma+2)/(p-m)$, one needs $z>Z_0$. We thus readily infer that $z(\eta)<z(\eta_0)<Z_0$ and then $y(\eta)<-(\sigma+2)/(p-m)$ for any $\eta>\eta_0$, as claimed.

We next show that the trajectory connects to the stable node $Q_5$. Since $y(\eta)<-(\sigma+2)/(p-m)$ for any $\eta>\eta_0$, it follows that both $x(\eta)$ and $z(\eta)$ are decreasing (and positive) for $\eta>\eta_0$. Moreover, we readily infer from \eqref{interm1} that
$$
p(N-2)<m(N+\sigma)<m(N+2\sigma+2), \quad {\rm that \ is}, \quad -\frac{\sigma+2}{p-m}<-\frac{N-2}{2m},
$$
which further implies that $y\mapsto-(N-2)y-my^2$ is decreasing for $y\in(-\infty,-(\sigma+2)/(p-m))$. We thus note that the second equation of the system \eqref{PSsyst} reads for $\eta>\eta_0$
$$
y'=z-y[N-2+my]-x\left(1+\frac{p-m}{\sigma+2}y\right)<Z_0+\frac{\sigma+2}{p-m}\left(N-2-m\frac{\sigma+2}{p-m}\right)=0,
$$
proving that $\eta\mapsto y(\eta)$ is also decreasing for $\eta>\eta_0$. Assume for contradiction that $y(\eta)$ remains bounded from below. Then, there exists a finite critical point
$$
(x_{\infty},y_{\infty},z_{\infty})=\lim\limits_{\eta\to\infty}(x(\eta),y(\eta),z(\eta))\in\real^3
$$
such that $z_{\infty}<Z_0$, $y_{\infty}<-(\sigma+2)/(p-1)$. But there is no such point, as the critical point $Q_2$ has the $y$-coordinate bigger than $-(\sigma+2)/(p-m)$, as it follows from \eqref{interm1}. We deduce that $y(\eta)\to-\infty$ as $\eta\to\eta^+$ (where $\eta^+\leq\infty$ is the supremum of the maximal interval of existence of the trajectory) and thus the trajectory enters $Q_5$, completing the proof.
\end{proof}
The final preparatory result in this section is a bit more involved and it shows that any trajectory entering the positively invariant region $\mathcal{R}$ defined in \eqref{regionR} also enters the positively invariant region $\mathcal{R}_0$ defined in \eqref{regionR0}.
\begin{proposition}\label{prop.RtoR0}
Let $(x(\eta),y(\eta),z(\eta))$ be a trajectory of the system \eqref{PSsyst} such that there is $\eta_0\in\real$ with
$$
(x(\eta_0),y(\eta_0),z(\eta_0))\in\mathcal{R}, \quad y(\eta_0)<0.
$$
Then, there exists $\eta_1>\eta_0$ such that $(x(\eta_1),y(\eta_1),z(\eta_1))\in\mathcal{R}_0$.
\end{proposition}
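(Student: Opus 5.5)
Since $\mathcal{R}$ is positively invariant by Proposition \ref{prop.inv}, the trajectory stays in $\mathcal{R}$ for all $\eta\geq\eta_0$. There we have $z>Z_0$ and $y>-(\sigma+2)/(p-m)$. Hence $\dot z=z(\sigma+2+(p-m)y)>0$, so $z$ is increasing. Also $\dot x=x(2+(1-m)y)$. The plan is to show two things: first, $y$ becomes positive in finite $\eta$; second, once $y>0$, the inequality $z>x$ is eventually satisfied, or already holds at that moment.

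\textbf{Step 1: $y$ must become positive.} Argue by contradiction and assume $y(\eta)\le0$ for all $\eta\ge\eta_0$ in the maximal interval. Then $y$ is trapped in the bounded interval $[-(\sigma+2)/(p-m),0]$. Consequently $\dot z\ge z(\sigma+2+(p-m)y)$ with a coefficient that is nonnegative but not obviously bounded away from zero.

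To force growth, I would use the second equation of \eqref{PSsyst}. Since $y$ is bounded, $-(N-2)y-my^2$ is bounded, and $z$ is increasing with $z\ge Z_0$. The term $-x(1+\frac{p-m}{\sigma+2}y)$ is nonpositive, because $1+\frac{p-m}{\sigma+2}y>0$ in $\mathcal{R}$. So the competition is between $z$ and $x$.

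The key comparison is between the growth rates of $x$ and $z$. We have
$$
\frac{d}{d\eta}\ln\frac{z}{x}=\sigma+(p-1)y.
$$
For $y\in(-\sigma/(p-1),0]$ this is positive, but $y$ could in principle sit below $-\sigma/(p-1)$. Note that $-\sigma/(p-1)<-(\sigma+2)/(p-m)$ is equivalent to $\sigma(p-m)>(\sigma+2)(p-1)$, which holds because $\sigma(1-m)>2(p-1)$, i.e. $\sigma>\sigma_*$. Thus in $\mathcal{R}$ the quantity $\sigma+(p-1)y$ is bounded below by a positive constant $c_0=\sigma-(p-1)(\sigma+2)/(p-m)>0$. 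Hence $z/x\ge Ke^{c_0\eta}\to\infty$.

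Also $\dot z/z\ge\sigma+2-(\sigma+2)=0$. More usefully,
$$
\frac{d}{d\eta}\ln z-\frac{p-m}{1-m}\,\frac{d}{d\eta}\ln x=\sigma+2-\frac{2(p-m)}{1-m}=\frac{(1-m)(\sigma-\sigma_*)}{1-m}\cdot(\text{positive})>0,
$$
a constant; this should also yield a growth bound relating $z$ and $x$. If needed, I would use it in combination with the exponential growth of $z/x$.

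\textbf{Step 2: conclude the contradiction.} Since $z/x\to\infty$, eventually $x\le\varepsilon z$. Then
$$
\dot y\ge z(1-\varepsilon C)-C'\ge z/2-C'
$$
with bounded constants, because $y$ is bounded. Now $z$ is increasing and $z>Z_0$. If $z\to\infty$, then $\dot y\to\infty$, contradicting the boundedness of $y$.

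If instead $z$ stays bounded, then $z$ has a limit. Then $\int(\sigma+2+(p-m)y)\,d\eta<\infty$, so $y\to-(\sigma+2)/(p-m)$ along a suitable sequence, and in fact in the limit since $\dot y$ is bounded. Moreover $x\to0$, since $x\le z/(Ke^{c_0\eta})$. The $\omega$-limit set would then be contained in $\{x=0,\ y=-(\sigma+2)/(p-m),\ z=z_\infty\}$ with $z_\infty>Z_0$. But invariance of the $\omega$-limit set requires it to be a critical point, so $z_\infty=Z_0$, contradicting $z_\infty>z(\eta_0)>Z_0$.

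Hence there is a first $\eta'$ with $y(\eta')>0$. Between $\eta_0$ and the first zero of $y$, the trajectory crosses $y=0$ upward. At a crossing with $z\le x$ this requires $z-x\ge0$, i.e. it can only happen with $z\ge x$; tangency is handled by the strict version.

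\textbf{Step 3: enter $\mathcal{R}_0$.} After $y$ becomes positive, $y$ stays positive as long as $z>x$. At the crossing point $\dot y=z-x\ge0$, so $z\ge x$ there, and the factor $z/x$ keeps increasing strictly. Slightly after the crossing we therefore have $y>0$ and $z>x$, which is a point of $\mathcal{R}_0$. One must exclude the degenerate case $z=x$ exactly at the crossing together with a subsequent return to $y<0$. This is handled by the strict monotonicity of $z/x$: just after crossing, $z>x$ and $y>0$.

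The main obstacle is Step 1, specifically ruling out the bounded-$z$ scenario in which the trajectory creeps along the plane $y=-(\sigma+2)/(p-m)$. I handle this by the $\omega$-limit argument together with the exponential decay of $x/z$.
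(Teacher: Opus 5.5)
Your proof is correct, and it follows a genuinely different and more economical route than the paper.

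\textbf{The paper's route.} The paper assumes the orbit stays in the strip $-(\sigma+2)/(p-m)<y<0$. It proves $\eta^+=\infty$ by returning to the profile $f$. It then splits into three cases: the orbit eventually stays in $\mathcal{X}_-$ (where $y\le -2/(1-m)$), eventually stays in $\mathcal{X}_+$, or oscillates between them. In each case it uses monotonicity of $x$ (or $z$) to write one coordinate as a function of the other and compares with $dz/dx=(\sigma+2)z/(2x)$. For the bounded-$z$ subcase of Case 1 it imports an oscillation/$\omega$-limit argument from an earlier work.

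\textbf{Your route.} You replace all of this with the single identity $\frac{d}{d\eta}\ln(z/x)=\sigma+(p-1)y$. This is bounded below on all of $\mathcal{R}$ by $c_0=\sigma-(p-1)(\sigma+2)/(p-m)>0$, precisely because $\sigma>\sigma_*$. It gives $x/z\to0$ exponentially, with no need for monotonicity of $x$ and no strip decomposition. This is more robust than the paper's Case 2, where $z\ge Kx^{(\sigma+2)/2}$ yields $z/x\to\infty$ only if $x\to\infty$, which is not checked there. Your dichotomy then closes the argument:
\begin{itemize}
\item If $z\to\infty$, then $\dot y\to\infty$, which is impossible for bounded $y$.
\item If $z$ stays bounded, then $\sigma+2+(p-m)y$ is integrable, and uniform continuity of $y$ forces convergence to $(0,-(\sigma+2)/(p-m),z_\infty)$. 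Since $z_\infty>Z_0$, this point is not an equilibrium. Equivalently, $\dot y\to z_\infty-Z_0>0$.
\end{itemize}
The same strict monotonicity of $z/x$ also rigorously justifies entering $\mathcal{R}_0$ when $y=0$ is crossed, which the paper asserts quickly.

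\textbf{Minor fixes.}
\begin{itemize}
\item State explicitly that $\eta^+=\infty$ under your contradiction hypothesis. This is immediate: with $y$ bounded, the $x$- and $z$-equations are linear with bounded coefficients, so there is no blow-up in finite $\eta$.
\item In Step 3 there is no ``first'' $\eta'$ with $y(\eta')>0$, since that set is open. Instead, take any such $\eta'$ and let $\eta_c$ be the last zero of $y$ in $[\eta_0,\eta']$. Then $\dot y(\eta_c)=z(\eta_c)-x(\eta_c)\ge0$, and strict growth of $z/x$ gives $z(\eta')>x(\eta')$, so $\eta_1=\eta'$ works.
\item Your unused side computation with $\frac{p-m}{1-m}\ln x$ has constant simply equal to $\sigma-\sigma_*$.
\end{itemize}
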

\begin{proof}
In order to fix the notation, we introduce the following strips that will play an important role in the proof:
\begin{equation*}
\begin{split}
&\mathcal{X}_{-}=\left\{(x,y,z)\in\real^3: -\frac{\sigma+2}{p-m}<y\leq-\frac{2}{1-m}\right\}, \\
&\mathcal{X}_{+}=\left\{(x,y,z)\in\real^3: -\frac{2}{1-m}<y<0\right\}.
\end{split}
\end{equation*}
We note, from the first equation of the system \eqref{PSsyst}, that $x(\eta)$ is a decreasing function of $\eta$ in the strip $\mathcal{X}_{-}$ and an increasing function of $\eta$ in the strip $\mathcal{X}_{+}$.

Assume next for contradiction that there is a trajectory $(x(\eta),y(\eta),z(\eta))$ of the system \eqref{PSsyst} as in the statement, which never enters the region $\mathcal{R}_0$. Since, by Proposition \ref{prop.inv} and its proof, the region $\mathcal{R}$ is positively invariant and crossing the plane $y=0$ might only occur at points with $z>x$ (thus immediately entering the region $\mathcal{R}_0$), we deduce that the trajectory $(x(\eta),y(\eta),z(\eta))$ remains in the strip $\mathcal{X}_{-}\cup\mathcal{X}_{+}$ for any $\eta>\eta_0$, which also guarantees that $\eta\mapsto z(\eta)$ is an increasing mapping for $\eta>\eta_0$.

We prove next that $\eta^+=\infty$, where $\eta^+>\eta_0$ is the supremum of the maximal interval of existence of the trajectory under consideration. Indeed, in the contrary case, if $\eta^+\in(0,\infty)$, then, undoing the change of variable \eqref{PSchange}, we get a profile $f(\xi)$ which is decreasing (since $y(\eta)<0$ in the strip $\mathcal{X}_{-}\cup\mathcal{X}_{+}$) and having the maximal interval of definition ending at $\xi^+=e^{\eta^+}$. By standard results in differential equations applied to \eqref{SSODE}, we infer that either $f(\xi)\to\infty$ as $\xi\to\xi^+$ (impossible since $f$ is decreasing in a left-neighborhood of $\xi^+$) or $f(\xi)\to0$ as $\xi\to\xi^+$. The latter and the definition of $z$ in \eqref{PSchange} entail that $z(\eta)\to0$ as $\eta\to\eta^+$, which is a contradiction with the monotonicity of $z$ in the strip $\mathcal{X}_{-}\cup\mathcal{X}_{+}$. This contradiction establishes that no blow-up is possible and thus the trajectory is defined for any $\eta\in(\eta_0,\infty)$. We thus have three cases.

\medskip

\noindent \textbf{Case 1.} There is $\eta_1>\eta_0$ such that $(x(\eta),y(\eta),z(\eta))\in\mathcal{X}_{-}$ for any $\eta\in\real$, $\eta>\eta_1$. It thus follows that $x(\eta)$ is decreasing with $\eta$, while $z(\eta)$ is increasing with $\eta$, and in particular there exist
$$
x_{\infty}=\lim\limits_{\eta\to\infty}x(\eta)\in[0,x(\eta_1)), \quad z_{\infty}=\lim\limits_{\eta\to\infty}z(\eta)\in(Z_0,\infty].
$$
If $Z_0<z_{\infty}<\infty$, then the monotone behavior of both $x$ and $z$, plus an argument of oscillations as in the proof of \cite[Proposition 4.10]{ILS24}, show that the $\omega$-limit of the trajectory is either a finite critical point $(x_{\infty},y_{\infty},z_{\infty})$ with $z_{\infty}>Z_0$ or a line segment $x=x_{\infty}$, $z=z_{\infty}$ and $y\in[y_{-},y_{+}]$, in such case the line segment being itself a trajectory of the system according to \cite[Theorem 2, Section 3.2]{Pe}. This is a contradiction, as there is no such critical point or invariant line (as it can be readily checked directly from the equations of the system). We thus deduce that $z_{\infty}=\infty$ and thus the second equation of the system \eqref{PSsyst} and the boundedness of $x$ and $y$ give
\begin{equation}\label{interm3}
\lim\limits_{\eta\to\infty}y'(\eta)=+\infty.
\end{equation}
But it is an easy calculus exercise based on the mean-value theorem to show that the latter limit implies that there is $\eta_2>\eta_1$ such that $y(\eta_2)>-2/(1-m)$, contradiction to the assumption that the trajectory remains in the strip $\mathcal{X}_{-}$.

\medskip

\noindent \textbf{Case 2.} There is $\eta_1>\eta_0$ such that $(x(\eta),y(\eta),z(\eta))\in\mathcal{X}_{+}$ for any $\eta\in\real$, $\eta>\eta_1$. It thus follows that both $x(\eta)$ and $z(\eta)$ are increasing functions of $\eta$. In this case, let us notice that
$$
h(y):=\frac{\sigma+2+(p-m)y}{2+(1-m)y}, \quad y\in\left(-\frac{2}{1-m},\infty\right)
$$
is a decreasing function of $y$, since
$$
h'(y)=\frac{(1-m)(\sigma_*-\sigma)}{(2+(1-m)y)^2}<0.
$$
The monotonicity of $x$ and the inverse function theorem allow to express $z(\eta)$ as a function $z(x(\eta))$, and the chain rule gives
\begin{equation}\label{interm4}
z'(x)=\frac{z(x)}{x}h(y)>\frac{z(x)}{x}h(0)=\frac{(\sigma+2)z(x)}{2x},
\end{equation}
taking into account the monotonicity of $h$ and the assumption that the trajectory remains in $\mathcal{X}_+$ for any $\eta>\eta_1$. Comparing \eqref{interm4} with the solution of the differential equation
$$
\frac{dz}{dx}=\frac{(\sigma+2)z(x)}{2x}
$$
starting from the same point inside the strip $\mathcal{X}_+$, we deduce that
$$
z(\eta)\geq Kx(\eta)^{(\sigma+2)/2}, \quad \eta>\eta_1,
$$
and thus $z/x\to\infty$ as $\eta\to\infty$, since $\sigma+2>2$. This implies once again that \eqref{interm3} is in force, hence the mean-value theorem gives that there is $\eta_3>\eta_2$ such that $y(\eta_3)>0$ and a contradiction to the main assumption of this case.

\medskip

\noindent \textbf{Case 3.} The trajectory oscillates forever between the strips $\mathcal{X}_{-}$ and $\mathcal{X}_+$. Since $\eta\mapsto z(\eta)$ is still increasing in this case, we can again employ the inverse function theorem and express $x$ as a function of $z$, with
$$
x'(z)=\frac{x(z)}{z}g(y)<\frac{2x(z)}{(\sigma+2)z},
$$
where $g(y)$ is the extension of $1/h(y)$ to the whole strip $\mathcal{X}_{-}\cup\mathcal{X}_+$. A comparison as in Case 2 gives that $x/z\to0$ as $\eta\to\infty$ and we arrive then to exactly the same contradiction as in Case 2. The contradiction obtained in any of the three cases shows that the trajectory $(x(\eta),y(\eta),z(\eta))$ enters the positively invariant region $\mathcal{R}_0$, completing the proof.
\end{proof}

\subsection{Proof of Theorem \ref{th.SSS}}

We complete the proof of Theorem \ref{th.SSS} in this section, employing for it the previous preparatory results. In a first step, we study the trajectories $l_0$ and $l_{\infty}$ limiting the unstable manifold $(l_C)_{C\in(0,\infty)}$ of the critical point $Q_1$.
\begin{proposition}\label{prop.limits}
The trajectory $l_{\infty}$, contained in the unstable manifold of $Q_1$ and in the invariant plane $x=0$, enters the positively invariant region $\mathcal{R}_0$. The trajectory $l_0$, contained in the unstable manifold of $Q_1$ and in the invariant plane $z=0$, crosses the plane $y=-(\sigma+2)/(p-m)$ and connects to the critical point $Q_5$.
\end{proposition}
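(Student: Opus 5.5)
We treat the two limiting trajectories separately, since each lies in an invariant plane where the system \eqref{PSsyst} reduces to a planar system.

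\textbf{The trajectory $l_{\infty}$.} In the plane $x=0$ the system becomes
$$
\dot{y}=-(N-2)y+z-my^2, \qquad \dot{z}=z(\sigma+2+(p-m)y).
$$
By Lemma \ref{lem.Q1}, $l_{\infty}$ leaves $Q_1$ tangent to the eigenvector $e_3=(0,1,N+\sigma)$. Hence for $\eta$ near $-\infty$ we have $y>0$, $z>0$ and $z\sim(N+\sigma)y$. Since $x\equiv0$, this gives $z>x$ together with $y>0$, so $l_{\infty}$ already lies in $\mathcal{R}_0$ for $\eta$ sufficiently negative, and positive invariance (Proposition \ref{prop.inv}) keeps it there.

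For $N=1,2$ we would argue the same way. The sign of $y$ along the $e_3$ direction is unchanged, and the only thing to check is that $y$ is positive initially. For this we use $\dot y\approx z>0$, because $z$ dominates $y^2$ near $Q_1$.

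\textbf{The trajectory $l_0$.} In the plane $z=0$ the system reads
$$
\dot{x}=x(2+(1-m)y), \qquad \dot{y}=-x-(N-2)y-my^2-\frac{p-m}{\sigma+2}xy.
$$
Along $l_0$, Lemma \ref{lem.Q1} gives $y\sim -x/N<0$ near $Q_1$. This corresponds to the decreasing profile $f$ of the unperturbed problem with $f(0)=A$ and no absorption, i.e.\ the profile for the fast diffusion term plus $\alpha f+\beta\xi f'$.

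First we show that $y$ stays negative. On the line $y=0$ with $x>0$ we have $\dot y=-x<0$, so the trajectory cannot cross into $y>0$. Next we argue by contradiction, assuming that $y>-(\sigma+2)/(p-m)$ for all $\eta$. Then $y$ stays bounded, and we split as in the proof of Proposition \ref{prop.RtoR0}.

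If $y$ eventually satisfies $y>-2/(1-m)$, then $x$ is increasing. If $x\to\infty$, then $\dot y\le -x(1+\frac{p-m}{\sigma+2}y)-(N-2)y-my^2$, and the factor $1+\frac{p-m}{\sigma+2}y$ is positive because $y>-(\sigma+2)/(p-m)$. Hence $\dot y\to-\infty$ once $x\to\infty$, which forces $y$ to decrease below $-(\sigma+2)/(p-m)$, a contradiction. If instead $x$ stays bounded, the $\omega$-limit would have to be a critical point in $\{z=0,\ x>0\}$, and there is none. Critical points with $x=0$, namely $Q_1$ and $Q_2$, are excluded as well: $Q_1$ is excluded because $x$ is increasing, and $Q_2$ because it is a source. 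The oscillating case is handled with the monotonicity/oscillation argument used in Proposition \ref{prop.RtoR0}.

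One can alternatively argue directly on profiles. Decreasing solutions of $(f^m)''+\frac{N-1}{\xi}(f^m)'+\alpha f+\beta\xi f'=0$ with $m<1$ have $\alpha f>0$ acting as a positive source against the concave diffusion; integrating $(\xi^{N-1}(f^m)')'=-\xi^{N-1}(\alpha f+\beta\xi f')$ shows $f$ must vanish at finite $\xi$.

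Once $y(\eta_2)<-(\sigma+2)/(p-m)$ for some $\eta_2$, Proposition \ref{prop.noret} applies. It is valid in the plane $z=0$, where $z-Z_0<0$ automatically. Its hypothesis is met since $y\to0>-(\sigma+2)/(p-m)$ as $\eta\to-\infty$, and it yields the connection of $l_0$ to $Q_5$.

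The main obstacle is excluding that $l_0$ stays forever in $-(\sigma+2)/(p-m)<y<0$ with $x$ bounded and oscillating. My first attempt would be the direct ODE argument: show that $\xi^{N-1}(f^m)'$ is decreasing enough that $f^m$ reaches zero at a finite $\xi$. This gives $z\equiv0$ but $y\to-\infty$ in finite $\eta$, i.e.\ the trajectory necessarily crosses the plane $y=-(\sigma+2)/(p-m)$.
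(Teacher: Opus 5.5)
Your treatment of $l_\infty$ matches the paper. For $l_0$, your subcase where $y$ eventually stays in $(-2/(1-m),0)$ is sound, since the factor $1+\frac{p-m}{\sigma+2}y$ is bounded below by a positive constant there and there is no critical point with $x>0$. The final appeal to Proposition~\ref{prop.noret} is also fine.

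\textbf{The gap is in the remaining cases.} You never treat a trajectory that stays eventually in $\mathcal{X}_-$, and you refer the oscillating case to the argument of Proposition~\ref{prop.RtoR0}, which does not transfer. That argument needs two things, neither of which exists in the plane $z=0$:
\begin{enumerate}
\item $z$ positive and increasing, so that one can parametrize by $z$ and get $x/z\to0$;
\item the term $+z$ in $\dot y$, which then pushes $y$ upward.
\end{enumerate}
You flag this as the main obstacle, but your fallback on profiles is only asserted. Since $\beta\xi f'<0$, the right-hand side $-\xi^{N-1}(\alpha f+\beta\xi f')$ has no evident sign. One must rewrite it, obtaining $\xi^{N-1}(f^m)'(\xi)=-\beta\xi^N f(\xi)-(\alpha-N\beta)\int_0^\xi s^{N-1}f(s)\,ds$, and then use $\alpha-N\beta=N(p_F-p)/[(1-m)(\sigma-\sigma_*)]>0$. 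Even then you only get $(f^m)'\le -c\,\xi^{1-N}$. This forces a zero for $N=1,2$, but is integrable at infinity for $N\ge3$. In that case you would still have to compare the resulting lower bound $f\gtrsim\xi^{-(N-2)/m}$ with the upper bound $f\lesssim\xi^{-2/(1-m)}$ coming from $(f^m)'<-\beta\xi f$. These are incompatible precisely because $m>m_c$. None of this is in your text.

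\textbf{How to close your phase-plane route.} On $\{z=0,\ x>0,\ -(\sigma+2)/(p-m)<y\le -2/(1-m)\}$ one has $\dot y=-x\bigl(1+\frac{p-m}{\sigma+2}y\bigr)-y(N-2+my)<0$. Both terms are negative, the second because $-2/(1-m)<-(N-2)/m$ (equivalent to $m>m_c$). Consequently:
\begin{itemize}
\item the line $y=-2/(1-m)$ is crossed only downward, so oscillation between $\mathcal{X}_+$ and $\mathcal{X}_-$ is impossible;
\item once in $\mathcal{X}_-$, both $x$ and $y$ decrease and would have to converge to a critical point with $y\le -2/(1-m)$, and neither $Q_1$ nor $Q_2$ qualifies.
\end{itemize}
Together with your subcase (a), this completes the argument.

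The paper instead obtains $\dot y<0$ along all of $l_0$ from the isocline $\dot y=0$, across which the flow has the sign of $H>0$. It bounds $x$ by showing that $l_0$ stays in the region $y<-(p-m)x/(\sigma+2)$, which is where it uses $p<p_F$. It then concludes by the same monotonicity and no-critical-point reasoning.
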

\begin{proof}
According to Lemma \ref{lem.Q1} and the comments below it, the trajectory $l_{\infty}$ goes out of $Q_1$ tangent to the eigenvector $e_3=(0,1,N+\sigma)$. It thus goes out directly into the region $\mathcal{R}_0$ and will remain forever there, by the positive invariance of this region. For the trajectory $l_0$, the proof is a bit more involved. We recall that $l_0$ goes out tangent to the eigenvector $e_1=(N,-1,0)$ and thus entering the half-plane $y<0$. The reduced system obtained from \eqref{PSsyst} in the invariant plane $z=0$ writes
\begin{equation}\label{PSsystz0}
\left\{\begin{array}{ll}\dot{x}=x(2+(1-m)y), \\ \dot{y}=-x-(N-2)y-my^2-\frac{p-m}{\sigma+2}xy,\end{array}\right.
\end{equation}
and thus the region $y<0$ is positively invariant, as readily seen from the direction of the flow of \eqref{PSsystz0} across the axis $y=0$. We first assume $N\geq3$ and consider the isocline $\dot{y}=0$, that is,
\begin{equation}\label{iso}
x+(N-2)y+my^2+\frac{p-m}{\sigma+2}xy=0,
\end{equation}
with normal direction
$$
\overline{n}(x,y)=\left(1+\frac{p-m}{\sigma+2}y,N-2+2my+\frac{p-m}{\sigma+2}x\right).
$$
Note that we can write \eqref{iso} in the form
$$
x=-\frac{(N-2)y+my^2}{1+(p-m)y/(\sigma+2)}
$$
and, since $(N-2)/m<(\sigma+2)/(p-m)$ by \eqref{interm1}, we infer that the curve \eqref{iso} connects the critical points $Q_1$ and $Q_2$ and $x>0$ exactly between them (and then also for $y<-(\sigma+2)/(p-m)$, but this part is not important for the proof in view of Proposition \ref{prop.noret}). The direction of the flow of the system \eqref{PSsystz0} across the isocline \eqref{iso} is given by the sign of the expression obtained as the scalar product between the vector field of \eqref{PSsystz0} and the normal direction $\overline{n}$, that is,
\begin{equation}\label{flow}
H(x,y):=x(2+(1-m)y)\left(1+\frac{p-m}{\sigma+2}y\right)>0
\end{equation}
provided $y>y(Q_2)=-(N-2)/m$, since by the fact that $m>m_c$ and by \eqref{interm1}, we have
$$
\frac{N-2}{m}<\frac{2}{1-m}<\frac{\sigma+2}{p-m}.
$$
Since the slope of the curve \eqref{iso} near $y=0$ is given by $x=-(N-2)y$ and direction of the eigenvector $e_1$ reads $x=-Ny>-(N-2)y$ (since $y<0$), we deduce that the trajectory $l_0$ goes out into the region $\dot{y}<0$ and the positivity in \eqref{flow} entails that it remains in this region, at least while $y\geq-(\sigma+2)/(p-m)$. The same conclusion is obvious in dimensions $N=1$ and $N=2$, where the positive hump of the isocline \eqref{iso} is no longer contained in the region $y<0$ and thus $\dot{y}<0$ in the whole strip $-(\sigma+2)/(p-m)<y<0$. Letting now
$$
p_F:=m+\frac{\sigma+2}{N},
$$
we observe that, since $m>m_c$, we have in our range of exponents
\begin{equation}\label{interm7}
p_F-p>m+\frac{\sigma_*+2}{N}-p=\frac{(p-m)(mN-N+2)}{N(1-m)}>0.
\end{equation}
Moreover, the direction of the flow of the system \eqref{PSsystz0} across the line
\begin{equation}\label{interm8}
\frac{(p-m)x}{\sigma+2}+y=0, \quad {\rm with \ normal \ direction} \quad \overline{n}=\left(\frac{p-m}{\sigma+2},1\right),
\end{equation}
is given by the sign of the expression
$$
l(x):=-\frac{N(p_F-p)}{\sigma+2}x<0,
$$
according to \eqref{interm7}. Since $p<p_F$, we also infer that $(p-m)/(\sigma+2)<1/N$, and thus, by comparing the vector direction of the line \eqref{interm8} with the direction of the eigenvector $e_1$ tangent to the curve $l_0$, we infer that the trajectory $l_0$ goes out from $Q_1$ in the region $y<-(p-m)x/(\sigma+2)$ and thus will remain there forever. This readily implies that $l_0$ has to cross the vertical line $y=-2/(1-m)$ which amounts to a change of monotonicity of the $x$-coordinate. We thus deduce that there is $\eta_0\in\real$ such that, for $\eta>\eta_0$, both $x(\eta)$ and $y(\eta)$ are decreasing with respect to $\eta$, and the non-existence of finite equilibrium points with stable manifolds for the system \eqref{PSsystz0} in the strip $\mathcal{X}_{-}$ ensures that the trajectory $l_0$ has to intersect the line $y=-(\sigma+2)/(p-m)$ and thus connect to the critical point $Q_5$, as proved in Proposition \ref{prop.noret}.
\end{proof}
We are now ready to complete the proof of Theorem \ref{th.SSS}.
\begin{proof}[Proof of Theorem \ref{th.SSS}]
We split the trajectories $(l_C)_{C\in(0,\infty)}$ into the following three sets:
\begin{equation}\label{sets}
\begin{split}
&\mathcal{A}:=\left\{C\in(0,\infty): {\rm there \ is} \ \eta_0\in\real, \ z(\eta_0)>Z_0 \ {\rm and} \ y(\eta_0)>-\frac{\sigma+2}{p-m}\right\},\\
&\mathcal{C}:=\left\{C\in(0,\infty): {\rm there \ is} \ \eta_0\in\real, \ y(\eta_0)<-\frac{\sigma+2}{p-m}\right\},\\
&\mathcal{B}:=(0,\infty)\setminus(\mathcal{A}\cup\mathcal{C}),
\end{split}
\end{equation}
of course, understanding $z(\eta_0)$ and $y(\eta_0)$ on the corresponding trajectory $l_C$. In other words, the trajectories corresponding to elements in $\mathcal{A}$ are those entering the positively invariant region $\mathcal{R}$ defined in \eqref{regionR}. It is obvious by definition that both sets $\mathcal{A}$ and $\mathcal{C}$ are open, while the continuity with respect to $C$ on the unstable manifold of $Q_1$ and Proposition \ref{prop.limits} ensure that there exist $C_*$, $C^*\in(0,\infty)$ such that $C_*<C^*$, $(0,C_*)\subseteq\mathcal{C}$ and $(C^*,\infty)\subseteq\mathcal{A}$.

On the one hand, Proposition \ref{prop.noret} entails that the trajectories $l_C$ with $C\in(0,C_*)$ connect to the critical point $Q_5$, while Proposition \ref{prop.RtoR0} implies that the trajectories $l_C$ with $C\in(C^*,\infty)$ also enter the invariant region $\mathcal{R}_0$ introduced in \eqref{regionR0}. Undoing the change of variable \eqref{PSchange} and recalling the correspondence of parameters given by \eqref{bij}, we deduce that there is $A_*$ (obtained by letting $C=C^*$ in \eqref{bij} and recalling the inverse proportionality) such that, for any $A\in(0,A_*)$, the profile $f(\cdot;A)$ has a unique positive minimum (corresponding to the single intersection point of the trajectory $l_C$ with the plane $y=0$) and then it increases to infinity, completing the proof of \eqref{increase}.

On the other hand, the set $\mathcal{B}$ is non-empty. Letting $C_0\in\mathcal{B}$, the definition of $\mathcal{B}$ and the non-tangency proved in Proposition \ref{prop.inv} imply that on the trajectory $l_{C_0}$ we have
\begin{equation}\label{bounds}
0<z(\eta)<Z_0, \quad -\frac{\sigma+2}{p-m}<y(\eta)<0, \quad \eta\in(-\infty,\eta^+),
\end{equation}
where $\eta^+$ is the supremum of the maximal interval of definition of the trajectory. A completely similar argument by comparison as in Cases 2 and 3 of the proof of Proposition \ref{prop.RtoR0} proves that there is $\eta_1\in\real$ such that $y(\eta)<-2/(1-m)$ for any $\eta>\eta_1$. Indeed, if this is not the case, then $x(\eta)$ is unbounded and $z(\eta)$ increases faster than $x(\eta)$, contradicting the boundedness of $z$ stated in \eqref{bounds}. We then infer that $x(\eta)\in(0,x(\eta_1))$ for any $\eta>\eta_1$, which, together with \eqref{bounds}, implies that the trajectory $l_{C_0}$ remains for any $\eta>\eta_1$ in a compact region of $\real^3$. Standard theory of dynamical systems give then that $\eta^+=\infty$ and there exist
$$
x_{\infty}=\lim\limits_{\eta\to\infty}x(\eta)\in[0,x(\eta_1)), \quad z_{\infty}=\lim\limits_{\eta\to\infty}z(\eta)\in(0,Z_0].
$$
Arguing by contradiction and assuming that $y(\eta)$ has infinitely many oscillations inside the strip $\mathcal{X}_{-}$, we infer from an argument of evaluating \eqref{PSsyst} in the minima and maxima of $y(\eta)$, as done in detail in the proof of \cite[Proposition 4.10]{ILS24}, that the $\omega$-limit set of $l_{C_0}$ is either a (finite) critical point or a critical line in variable $y$. Since the latter does not exist, we deduce that $l_{C_0}$ converges to a critical point and (taking into account the opposite stability of $Q_2$ as shown in Lemma \ref{lem.Q2}) the only one possible is $Q_3$. The same outcome is directly derived if $y(\eta)$ is monotone with respect to $\eta$ for $\eta>\eta_2$ sufficiently large. We have thus established a connection $Q_1-Q_3$ we infer from Lemma \ref{lem.Q3} and by undoing the change of variable \eqref{PSsyst} that the corresponding profile $f(\cdot;A_*)$ (with $A_*$ obtained by letting $C=C_0$ in \eqref{bij}) satisfies \eqref{decay}, completing the proof.
\end{proof}

We plot in Figure \ref{fig1} a number of trajectories contained in the unstable manifold of $Q_1$, illustrating the sets $\mathcal{A}$ and $\mathcal{C}$ and the striking difference of their behavior with respect to the plane $y=-(\sigma+2)/(p-m)$.

\begin{figure}[ht!]
  \begin{center}
  \includegraphics[width=11cm,height=7.5cm]{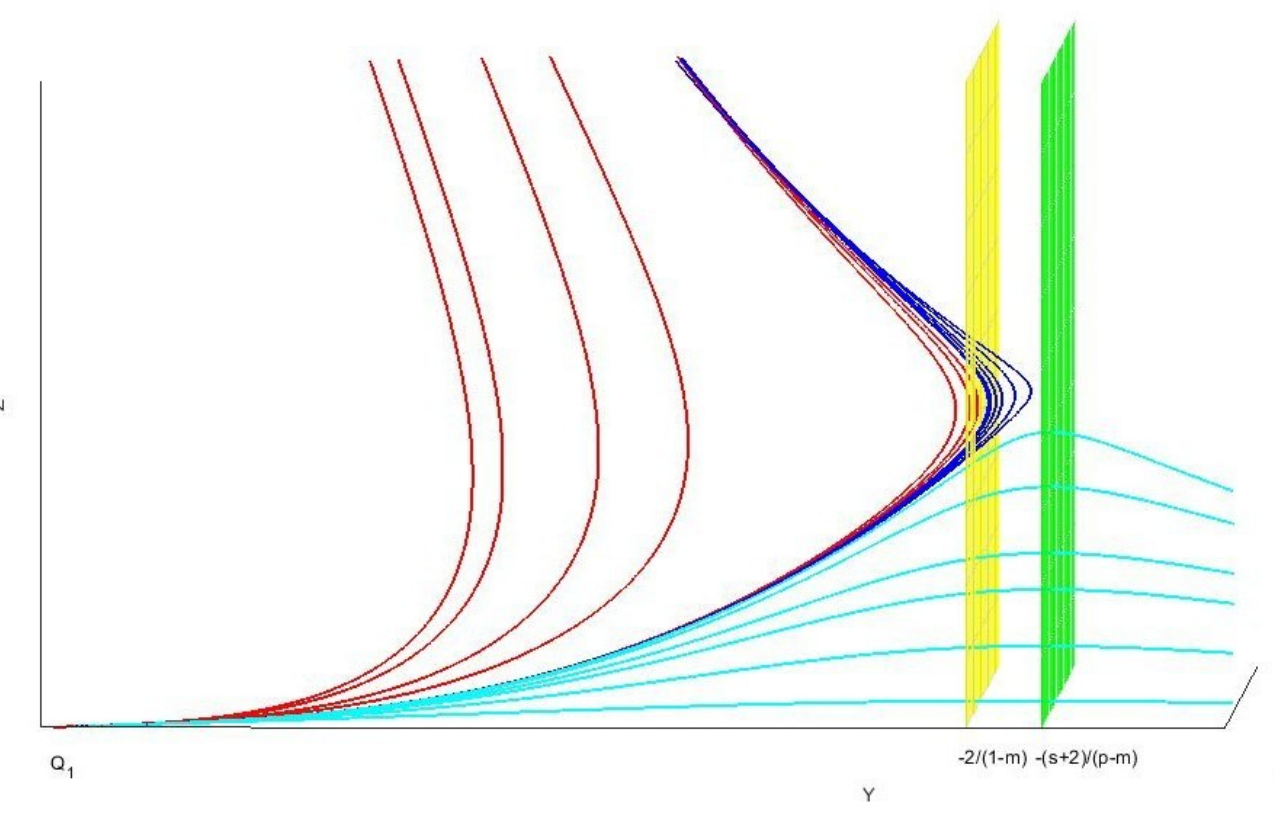}
  \end{center}
  \caption{Trajectories $l_C$ going out of $Q_1$. Numerical experiments for $m=0.5$, $p=2$, $N=3$ and $\sigma=4.5$.}\label{fig1}
\end{figure}

\section{Extinction rates. Proof of Theorem \ref{th.extinction}}

This section is dedicated to the proof of the more qualitative results, concerning general solutions to Eq. \eqref{eq1}, related to their extinction behavior. Assume thus that the initial condition $u_0$ satisfies \eqref{ic} and let $u$ be the solution to the Cauchy problem \eqref{eq1}-\eqref{ic}. Since any positive constant is a supersolution to Eq. \eqref{eq1}, we deduce by comparison that $u(x,t)\leq\|u_0\|_{\infty}$, for any $(x,t)\in\real^N\times(0,\infty)$. We also prove the following preparatory result.
\begin{lemma}\label{lem.dec}
Let $u_0$ be a radially symmetric initial condition with a non-increasing profile. Then the solution $u$ to the Cauchy problem \eqref{eq1}-\eqref{ic} with initial condition $u_0$ is radially symmetric and non-increasing at any $t>0$ for which $u(x,t)>0$ for any $x\in\real^N$.
\end{lemma}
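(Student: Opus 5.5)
Radial symmetry follows from uniqueness: $\Delta$ and the weight $|x|^{\sigma}$ are invariant under orthogonal transformations $R$ of $\real^N$, so $u(Rx,t)$ solves \eqref{eq1} with initial datum $u_0(Rx)=u_0(x)$. By the uniqueness part of the well-posedness theory (the analogue of \cite[Theorem 2.1]{PZ91}), $u(Rx,t)=u(x,t)$ for every $R$.

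For the monotonicity we use the Alexandrov reflection principle, applied to the approximating Dirichlet problems and then passed to the limit. Fix a hyperplane $H_\lambda=\{x_1=\lambda\}$ with $\lambda>0$, and let $x^\lambda=(2\lambda-x_1,x_2,\dots,x_N)$ denote the reflection. On the half space $\Sigma_\lambda=\{x_1>\lambda\}$ set $w(x,t)=u(x^\lambda,t)$. For $x\in\Sigma_\lambda$ we have $|x^\lambda|<|x|$, so $|x^\lambda|^{\sigma}\le|x|^{\sigma}$, and therefore
$$
w_t=\Delta w^m-|x^\lambda|^{\sigma}w^p\ge\Delta w^m-|x|^{\sigma}w^p .
$$
Thus $w$ is a supersolution of \eqref{eq1} in $\Sigma_\lambda$. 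This is the point where the direction of the weight matters: the absorption is stronger farther from the origin, which is favourable. On $H_\lambda$ we have $w=u$, and initially $w(x,0)=u_0(x^\lambda)\ge u_0(x)$ because $u_0$ is radially non-increasing. Comparison in $\Sigma_\lambda$ then gives $u(x,t)\le u(x^\lambda,t)$ for all $x\in\Sigma_\lambda$. Since $\lambda>0$ and the direction of reflection are arbitrary, this together with radial symmetry yields that $r\mapsto u(r,t)$ is non-increasing.

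The main obstacle is justifying comparison in the unbounded half space for a degenerate, singular equation. To handle it, we would carry out the argument at the level of the approximations used in \cite{PZ91}: Dirichlet problems in balls $B(0,R)$ with boundary value $\epsilon>0$ and data $u_0+\epsilon$ (or a smooth radially non-increasing regularization). On these problems the equation is uniformly parabolic, since the solution stays between $\epsilon$ and $\|u_0\|_\infty+\epsilon$, so the classical maximum principle applies. The reflection is performed on the bounded set $\Sigma_\lambda\cap B(0,R)$. Its reflected image lies inside $B(0,R)$, and on the spherical part of the boundary $u^\epsilon=\epsilon\le w$. Hence comparison holds there, and the monotonicity survives the limit $\epsilon\to0$, $R\to\infty$ by pointwise convergence. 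The positivity restriction in the statement ensures that $u(\cdot,t)>0$ at the times considered. This keeps the equation non-degenerate, so that $u$ is smooth and the limiting monotone profile is a genuine classical radial solution, which is what is used in the sequel. We expect no further difficulty.
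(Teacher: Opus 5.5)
\textbf{The gap.} Your reflection route differs from the paper's proof. The paper differentiates the radial equation, writes the equation for $v=u_r$, and compares $v$ with $0$ using the sign of the term $-\sigma r^{\sigma-1}u^p$. Your observation $|x^\lambda|<|x|$ on $\Sigma_\lambda$ is the exact counterpart of that sign, so the structural idea is sound. However, your justification of the comparison fails at a concrete step.

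For the approximations with Dirichlet value $\epsilon$ on $\partial B_R$ and initial value $u_0+\epsilon$, the solution does \emph{not} stay above $\epsilon$. Constants are supersolutions of \eqref{eq1}, not subsolutions: $\partial_t\epsilon-\Delta\epsilon^m+|x|^{\sigma}\epsilon^p=|x|^{\sigma}\epsilon^p\geq0$. Take $u_0\equiv0$. The approximant starts at $\epsilon$, stays $\leq\epsilon$, and is strictly below $\epsilon$ at interior points for $t>0$, while it equals $\epsilon$ on $\partial B_R$. Any interior point $y$ is $x^\lambda$ for some $x\in\partial B_R\cap\Sigma_\lambda$ with $\lambda>0$ (take the perpendicular bisector of the segment from $y$ to $x$). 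So your boundary inequality $u^\epsilon(x,t)=\epsilon\leq u^\epsilon(x^\lambda,t)$ on the spherical part of $\partial(\Sigma_\lambda\cap B_R)$ is false. In fact these approximants are not radially non-increasing at all: they have a boundary layer rising to $\epsilon$. The reflection comparison therefore cannot be run at that level, and there is no monotonicity to pass to the limit.

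\textbf{How to repair it.} Prescribe boundary data that lie below the interior values. One option is to set $\phi_R(t)$ on $\partial B_R$, where $\phi_R'=-R^{\sigma}\phi_R^p$ and $\phi_R(0)=\epsilon$. Since $|x|^{\sigma}\leq R^{\sigma}$ in $B_R$, $\phi_R$ is a subsolution. Hence $u^{\epsilon,R}\geq\phi_R(t)\geq\phi_R(T)>0$ on $B_R\times[0,T]$. This gives both the uniform parabolicity you need and the ordering on the spherical boundary.

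Another option is zero Dirichlet data. Then $0\leq w$ on $\partial B_R$ is trivial, the approximants are monotone in $R$ and bounded by $u$, and you use a weak ($L^1$-type) comparison on $\Sigma_\lambda\cap B_R$ instead of the classical maximum principle. In either case you must identify the limit with $u$ through the uniqueness of bounded weak solutions.

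\textbf{What your route buys once repaired.} It needs only comparison for \eqref{eq1} itself. Unlike the paper's argument, it does not require differentiability of $u$, a boundary condition for $u_r$ at $r=0$, or a comparison principle for the $v$-equation on the unbounded half-line. It also shows that the positivity restriction in the statement is unnecessary, since pointwise limits preserve $u(x,t)\leq u(x^\lambda,t)$ for every $t>0$. Your closing remarks about positivity guaranteeing classical regularity can therefore be dropped.
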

\begin{proof}
The radial symmetry follows trivially from the invariance to rotations of Eq. \eqref{eq1}. For the monotonicity, we establish the equation satisfied by the radial derivative $u_r$ of a solution to Eq. \eqref{eq1}. Letting $v=u_r$, we start from the formulation of Eq. \eqref{eq1} in the radial variable $r=|x|$, that is,
\begin{equation*}
\begin{split}
u_t&=(u^m)_{rr}+\frac{N-1}{r}(u^m)_r-r^{\sigma}u^p=m(u^{m-1}v)_r+\frac{m(N-1)}{r}u^{m-1}v-r^{\sigma}u^p\\
&=m(m-1)u^{m-2}v^2+mu^{m-1}v_r+\frac{m(N-1)}{r}u^{m-1}v-r^{\sigma}u^p.
\end{split}
\end{equation*}
We differentiate with respect to $r$ in the previous equation and obtain the equation satisfied by $v$:
\begin{equation*}
\begin{split}
v_t&=mu^{m-1}v_{rr}+3m(m-1)u^{m-2}vv_r+m(m-1)(m-2)u^{m-3}v^3-\frac{m(N-1)}{r^2}u^{m-1}v\\
&+\frac{m(N-1)(m-1)}{r}u^{m-2}v^2+\frac{m(N-1)}{r}u^{m-1}v_r-pr^{\sigma}u^{p-1}v-\sigma r^{\sigma-1}u^p.
\end{split}
\end{equation*}
Since $u(t)>0$ at any point by assumption, the previous equation has regular coefficients and we thus readily notice that $v\equiv0$ is a supersolution to the previous equation, as the only term not depending on $v$, $\sigma r^{\sigma-1}u^p$, is positive. Since $u_0$ is radially non-increasing, it follows that $v_0(r)\leq 0$ for any $r\geq0$, and thus $v(r,t)\leq0$ by comparison, which proves the monotonicity of $u(t)$ for any $t>0$ such that $u(x,t)>0$ for any $x\in\real^N$.
\end{proof}
We are now ready to prove Theorem \ref{th.extinction}.
\begin{proof}[Proof of Theorem \ref{th.extinction}]
The proof is divided into two parts.

\medskip

\noindent \textbf{Part 1. General extinction.} Pick $A>0$ such that the profile $f(\cdot;A)$ satisfies \eqref{increase}, which exists according to Theorem \ref{th.SSS}. Consider the self-similar solution constructed with the profile $f(\cdot;A)$, that is,
$$
U_{A,T}(x,t)=(T-t)^{\alpha}f(|x|(T-t)^{\beta};A),
$$
choosing $T>0$ sufficiently large such that
\begin{equation}\label{interm9}
T^{\alpha}f(\xi_0(A);A)>\|u_0\|_{\infty}.
\end{equation}
Since the solution $U_{A,T}$ is unbounded, we construct a bounded supersolution by taking the minimum between $U_{A,T}$ and a sufficiently large constant, for example
$$
\overline{U}(x,t)=\min\{U_{A,T}(x,t),2AT^{\alpha}\}.
$$
We infer from \eqref{interm9} that
$$
U_{A,T}(x,0)=T^{\alpha}f(|x|T^{\beta};A)\geq T^{\alpha}f(\xi_0(A);A)>\|u_0\|_{\infty}\geq u_0(x),
$$
for any $x\in\real^N$, and
$$
2AT^{\alpha}=2f(0;A)T^{\alpha}>2T^{\alpha}f(\xi_0(A),A)>2\|u_0\|_{\infty}.
$$
Since $\overline{U}$ is a supersolution (as the minimum between a solution and a supersolution), the comparison principle then gives $u(x,t)\leq \overline{U}(x,t)$, for any $(x,t)\in\real^N\times(0,\infty)$. Since the previous argument is only uniform in compact subsets of $\real^N$ but not in $\real^N$, in order to avoid the possible concentration as $|x|\to\infty$ we apply it in a first step for constant initial conditions $u_0\equiv K\in(0,\infty)$, for which Lemma \ref{lem.dec} shows that $u(t)$ is non-increasing for any $t>0$. Thus, extinction on a compact subset entails uniform extinction in $\real^N$ for such solutions. Then, by comparison with the solution with initial condition $\|u_0\|_{\infty}$, we extend the proof of the finite time extinction to any bounded solution to Eq. \eqref{eq1}, as claimed.

\medskip

\noindent \textbf{Part 2. Lower extinction rate.} Let $u_0$ be as in \eqref{ic} and let $T(u_0)$ be its extinction time. Consider, as in Part 1, the self-similar solution
$$
U_{A,T(u_0)}(x,t)=(T(u_0)-t)^{\alpha}f(|x|(T(u_0)-t)^{\beta};A),
$$
where $A>0$ is chosen once again such that $f(\cdot;A)$ satisfies \eqref{increase}. Since $u$ and $U_{A,T(u_0)}$ have exactly the same extinction time and there is $R>0$ such that, for $|x|\geq R$, we have $U_{A,T(u_0)}(x,t)>\|u_0\|_{\infty}\geq u(x,t)$, the two solutions cannot be completely ordered (the proof of this fact is straightforward and is an easy adaptation of the proof of the uniqueness in \cite[p. 34-35]{IL25b}). Thus, for any $t\in(0,T)$, there is $x(t)\in\real^N$ such that
\begin{equation*}
\begin{split}
u(x(t),t)&\geq U_{A,T(u_0)}(x(t),t)=(T(u_0)-t)^{\alpha}f(|x(t)|(T(u_0)-t)^{\beta};A)\\
&\geq (T(u_0)-t)^{\alpha}f(\xi_0(A);A),
\end{split}
\end{equation*}
which gives
$$
\|u(t)\|_{\infty}\geq C_0(T(u_0)-t)^{\alpha}, \quad C_0=f(\xi_0(A);A)>0, \quad t\in(0,T),
$$
completing the proof of the lower extinction rate and thus of the theorem.
\end{proof}

\noindent \textbf{A class of data satisfying the upper extinction rate.} We next employ the technique of \emph{intersection comparison}, which is only valid either in dimension $N=1$ or for radially symmetric solutions (see for example \cite[Chapter IV.4.1, pp. 240-242]{S4} and \cite[Section 5]{GV94} for a theoretical basis of it, the latter reference applying it for an extinction problem). Let $u_0$ be a radially symmetric and radially non-increasing initial condition as in \eqref{ic}, with $T(u_0)$ its extinction time. We infer from Lemma \ref{lem.dec} that $u(t)$ is also radially symmetric and radially non-increasing for any $t\in(0,T(u_0))$. Let us then assume that the profiles
\begin{equation}\label{interm10}
f(r;A^*) \quad {\rm and} \quad T(u_0)^{-\alpha}u_0(rT(u_0)^{-\beta}), \quad r\in[0,\infty)
\end{equation}
have a single point of intersection and that $u_0$ satisfies \eqref{cond.upper}. Defining the self-similar solution
$$
U_{A^*,T(u_0)}(x,t)=(T(u_0)-t)^{\alpha}f(|x|(T(u_0)-t)^{\beta};A^*), \quad (x,t)\in\real^N\times(0,T(u_0)),
$$
the assumption \eqref{interm10} implies that the profiles (in radial variables) of $U_{A^*,T(u_0)}(\cdot,0)$ and $u_0$ have a single intersection point. Moreover, \eqref{cond.upper} gives that there is $R>0$ such that
$$
T(u_0)^{-\alpha}u_0(rT(u_0)^{-\beta})>f(r;A^*), \quad r>R,
$$
or, equivalently,
\begin{equation}\label{interm11}
u_0(x)>T(u_0)^{\alpha}f(|x|T(u_0)^{\beta};A^*), \quad |x|>RT(u_0)^{-\beta}.
\end{equation}
Since the two functions $u$ and $U_{A^*,T(u_0)}$ are both solutions to Eq. \eqref{eq1} with the same extinction time and a strong separation of the tails as $|x|\to\infty$, the technique of intersection comparison then ensures that the profiles (in radial variables) of $U_{A^*,T(u_0)}(t)$ and $u(t)$ will also have exactly a single intersection point, for any $t\in(0,T(u_0))$. The ordering of the tails in \eqref{interm11} and the uniqueness of the intersection point proves that, at $x=0$, the opposite ordering holds true, that is,
\begin{equation}\label{interm12}
u(0,t)\leq U_{A^*,T(u_0)}(0,t)=(T-t)^{\alpha}A^*, \quad t\in(0,T(u_0)).
\end{equation}
Since, by Lemma \ref{lem.dec}, $u(0,t)=\|u(t)\|_{\infty}$, the upper extinction rate \eqref{cond.upper} follows from \eqref{interm12}.

\medskip

\noindent \textbf{Remark.} Note that the solutions stemming from constant initial conditions $u_0\equiv K>0$ or from slowly decaying initial conditions satisfy the assumption \eqref{interm10}, being thus relevant examples of initial conditions for which the upper extinction rate applies.

\bigskip

\noindent \textbf{Acknowledgements} This work is partially supported by the Spanish project PID2024-160967NB-I00 (AEI) funded by the Ministry of Science, Innovation and Universities of Spain.

\bigskip

\noindent \textbf{Data availability} Our manuscript has no associated data.

\bigskip

\noindent \textbf{Conflict of interest} The authors declare that there is no conflict of interest.

\bibliographystyle{plain}

\end{document}